\documentclass{amsart}
\usepackage{amsmath}
  \usepackage{paralist}
  \usepackage{graphics} 
  \usepackage{epsfig} 
\usepackage{graphicx}  \usepackage{epstopdf}
 \usepackage[colorlinks=true]{hyperref}
\hypersetup{urlcolor=blue, citecolor=red}
\usepackage{verbatim,dsfont,amssymb,lipsum,subcaption}

  \textheight=8.2 true in
   \textwidth=5.0 true in
    \topmargin 30pt
     \setcounter{page}{1}



\newtheorem{theorem}{Theorem}[section]
\newtheorem{corollary}[theorem]{Corollary}

\newtheorem{lemma}[theorem]{Lemma}
\newtheorem{proposition}[theorem]{Proposition}

\theoremstyle{definition}
\newtheorem{definition}[theorem]{Definition}
\newtheorem{remark}[theorem]{Remark}

\newtheorem{assumptions}[theorem]{Assumptions}
\newtheorem{example}[theorem]{Example}
\newtheorem{algorithm}[theorem]{Algorithm}

\newcommand{\eps}{\varepsilon}

\newcommand{\dee}{\mathrm{d}}

\newcommand{\mmd}{}

\graphicspath{{./images/}}

\title[The Bayesian Formulation of EIT] 
      {The Bayesian Formulation of EIT:\\Analysis and Algorithms}

\author[M. M. Dunlop and A. M. Stuart]{}

\subjclass{Primary: 62G05, 65N21; Secondary: 92C55.}
 \keywords{inverse problems, ill-posed problems, electrical impedance tomography, Bayesian regularisation, geometric priors, level set methodology.}

 \email{mdunlop@caltech.edu}
 \email{astuart@caltech.edu}



\begin{document}
\maketitle

\centerline{\scshape Matthew M. Dunlop}
\medskip
{\footnotesize
 \centerline{Computing \& Mathematical Sciences,}
   \centerline{California Institute of Technology,}
   \centerline{Pasadena, CA 91125, USA}
} 

\medskip

\centerline{\scshape Andrew M. Stuart}
\medskip
{\footnotesize
 \centerline{Computing \& Mathematical Sciences,}
   \centerline{California Institute of Technology,}
   \centerline{Pasadena, CA 91125, USA}
}


\begin{abstract}
We provide a rigorous Bayesian formulation of the EIT problem in an infinite dimensional setting, leading to well-posedness in the Hellinger metric with respect to the data. We focus particularly on the reconstruction of binary fields where the interface between different media is the primary unknown. We consider three different prior models -- log-Gaussian, star-shaped and level set. Numerical simulations based on the implementation of MCMC are performed, illustrating the advantages and disadvantages of each type of prior in the reconstruction, in the
case where the true conductivity is a binary field, and exhibiting the properties of the resulting posterior distribution.
\end{abstract}

\section{Introduction}
\subsection{Background}

Electrical Impedance Tomography (EIT) is an imaging technique in which the conductivity of a body is inferred from electrode measurements on its surface. Examples include medical imaging, where the methodology is used to non-invasively detect abnormal tissue within a patient, and subsurface imaging where material properties of the subsurface are determined from surface (or occasional interior) measurements of the electrical response; the methodology is  often referred to as electrical resistance tomography -- ERT -- in this context and discussed in more detail below. The concept of EIT appears as early as the late 1970's \cite{eit_early} and ERT the 1930's \cite{ert}.
 
A very influential  mathematical formulation of the inverse problem associated with EIT dates back to 1980, due to Calder\'on. He formulated an abstract version of the problem, in which the objective is recovery of the coefficient of a divergence form elliptic PDE from knowledge of its Neumann-to-Dirichlet or Dirichlet-to-Neumann operator. Specifically, in the Dirichlet-to-Neumann case, if $D\subseteq\mathbb{R}^d$ and $g \in H^{1/2}(\partial D)$ is given, let $u \in H^1(D)$ solve
\[
\begin{cases}
-\nabla\cdot(\sigma\nabla u) = 0 &\text{ in }D\\
\hspace{1.6cm}u = g &\text{ on }\partial D.
\end{cases}
\]
The problem of interest is then to ask does the mapping $\Lambda_\sigma:H^{1/2}(\partial D)\rightarrow$\linebreak$H^{-1/2}(\partial D)$ given by
\[
g\mapsto \sigma\frac{\partial u}{\partial \nu}
\]
determine the coefficient $\sigma$ \mmd{in $D$}? Physically, $g$ corresponds to boundary voltage measurements, and $\Lambda_\sigma(g)$ corresponds to the current density on the boundary. Much study has been carried out on this problem -- some significant results, in the case where all conductivities are in $C^2(\overline{D})$ and $d \geq 3$, concern uniqueness \cite{uniqueness}, reconstruction \cite{reconstruction}, stability \cite{stability} and partial data \cite{partialdata}. Details of these results are summarised in \cite{calderon}.

In 1996, Nachman proved global uniqueness and provided a reconstruction procedure for the case $d = 2$, involving the use of a scattering transform and solving a D-bar problem \cite{nachman96}. The D-bar equation involved is a differential equation of the form $\overline{\partial}q = f$, where $\overline{\partial}$ denotes the conjugate of the complex derivative and $f$ depends on the scattering transform. A regularised D-bar approach, involving the truncation of the scattering transform, was provided in \cite{eit_disc2,eit_disc1}, enabling the recovery of features of discontinuous permeabilities. The regularised D-bar approach is also used in \cite{regdbar}, for the case when the data is not of infinite precision. Other work in the area includes joint inference of the shape of the domain and conductivity \cite{eit_shape}.

For applications, a more physically appropriate model for EIT was provided in 1992 in \cite{cheney}. This model, referred to as the complete electrode model (CEM), replaces complete boundary potential measurements with measurements of constant potential along electrodes on the boundary, subject to contact impedances. The authors show that predictions from this model agree with experimental measurements up to the measurement precision of 0.1\%. For this model they also prove existence and uniqueness of the associated electric potential. It is this model that we shall consider in this paper, and it is outlined in section \ref{sec:forward}. 

When using the CEM, there is a limitation on the number of measurements that can be taken to provide additional information. \mmd{This is due to the fact that there are only a finite number of electrodes through which current can be injected and voltages read, combined with the} linear relationship between current and voltage. The data is therefore finite dimensional in the inverse problem, as distinct from the Calder\'on problem where knowledge of an infinite dimensional operator is assumed. As a consequence, reconstruction using the CEM often makes use of Tikhonov regularisation; \mmd{such a regularisation can also be used to account for noise on the data}. The paper \cite{eit_fem} analyses numerical convergence when an $H^1$ or TV penalty term is used, with a finite element discretisation of the problem. We will adopt a Bayesian approach to regularisation, and this is discussed below.

A closely related problem to EIT is Electrical Resistivity Tomography (ERT), which concerns subsurface inference from surface potential measurements, see for example \cite{ert} which discussed the problem as early as 1933. Physically the main difference between EIT and ERT is that alternating current is typically used for the former, and direct current for the latter. Additionally, due to the scale of ERT, it is a reasonable approximation to model the electrodes as points, rather than using the CEM. Another difference between the two is that the relative contrast between the conductivities of different media are typically higher in subsurface applications than medical applications, which permits the approximation of the problem by a network of resistors in some cases \cite{network}. Nonetheless, the Bayesian theory and MCMC methodology introduced here will be useful for the ERT problem as well as the EIT problem.

Statistical, in particular Bayesian, approaches to EIT inversion have previously been studied, for example in \cite{frechet,bayes1,whittlematern}. In \cite{frechet}, the authors prove certain regularity of the forward map associated with the CEM, formulate the Bayesian inverse problem in terms of the discretised model, and investigate the effect of different priors on reconstruction and behaviour of the posterior. The paper \cite{whittlematern} focuses on Whittle-Mat\`ern priors, using EIT and ERT as examples for numerical simulation. The paper \cite{bayes1} presents a regularised version of the inverse problem, which admits a Bayesian interpretation.

The Bayesian approach to inverse problems, especially in infinite dimensions, is a relatively new technique. Two approaches are typically taken: discretise first and use finite dimensional Bayesian techniques, or apply the Bayesian methodology directly on function space \mmd{before} discretising. The former approach is outlined in \cite{kaipio}. The latter approach for linear problems has been studied in \cite{linear1,lassas2009discretization,linear3,linear2}. More recently, this approach has been applied to nonlinear problems \cite{lecturenotes,lasanen1,lasanen2,inverse}. It is this approach that we will be taking in this paper.

\subsection{Our Contribution}
The main contributions in our paper are as follows:
\begin{enumerate}[(i)]
\item This is the first paper to give a rigorous Bayesian formulation of EIT in infinite dimensions.
\item This setting leads to proof that the posterior measure is Lipschitz in the data, with respect to the Hellinger metric, for all three priors studied; further stability properties of the posterior with respect to perturbations, such as numerical approximation of the forward model, may be proved similarly.
\item We employ a variety of prior models, based on the assumption that the underlying conductivity we wish to recover is binary. We initially look at log-Gaussian priors, before focusing on priors which enforce the binary field property. These binary field priors include both single star-shaped inclusions parametrised by their centre and by a radial function \cite{star}, and arbitrary geometric interfaces between the two conductivity values defined via level set functions \cite{levelset}.
\item Numerical results using state of the art MCMC demonstrate the importance of the prior choice for accurate reconstruction in the severely underdetermined inverse problems arising in EIT.
\end{enumerate}
\subsection{Organisation of the Paper}
In section \ref{sec:forward} we describe the forward map associated with the EIT problem, and prove relevant regularity properties. In section \ref{sec:post} we formulate the inverse problem rigorously and describe our three prior models. We then prove existence and well-posedness of the posterior distribution for each of these choices of prior.  In section \ref{sec:num} we present results of numerical MCMC simulations to investigate the effect of the choice of prior on the recovery of certain binary conductivity fields. We conclude in section \ref{sec:conc}.

\section{The Forward Model}
\label{sec:forward}
In subsection \ref{ssec:cem} we describe the complete electrode model for EIT as given in \cite{cheney}. In subsection \ref{ssec:wcem} we give the weak formulation of this model, stating assumptions required for the quoted existence and uniqueness result. Then in subsection \ref{ssec:fwd} we define the forward map in terms of this model, and prove that this map is continuous with respect to both uniform convergence and convergence in measure.
\subsection{Problem Statement}
\label{ssec:cem}
Let $D\subseteq \mathbb{R}^d$, $d \leq 3$, be a bounded open set \mmd{with smooth boundary} representing a body, with conductivity  $\sigma:\overline{D}\rightarrow\mathbb{R}$. A number $L$ of electrodes are attached to the surface of the body. We treat these as subsets $(e_l)_{l=1}^L$ of the boundary $\partial D$, and assume that they have contact impedances $(z_l)_{l=1}^L \in \mathbb{R}^L$. A current stimulation pattern $(I_l)_{l=1}^L \in \mathbb{R}^L$ is applied to the electrodes. Then the electric potential $v$ within the body and boundary voltages $(V_l)_{l=1}^L \in \mathbb{R}^L$ on $(e_l)_{l=1}^L$ are modelled by the following partial differential equation. 

\begin{align}
\label{eq:strongpde}
\begin{cases}
\displaystyle-\nabla\cdot(\sigma(x)\nabla v(x)) = 0 & x \in D\\[.8em]
\displaystyle\int_{e_l} \sigma\frac{\partial v}{\partial n}\,\dee S = I_l & l=1,\ldots,L\\[.8em]
\displaystyle\sigma(x)\frac{\partial v}{\partial n}(x) = 0 & x \in \partial D\setminus\bigcup_{l=1}^L e_l\\[.8em]
\displaystyle v(x) + z_l\sigma(x)\frac{\partial v}{\partial n}(x) = V_l & x \in e_l, l=1,\ldots,L 
\end{cases}
\end{align}

This model was first proposed in \cite{cheney}; a derivation can be found therein. Note that the inputs to this forward model are the conductivity $\sigma$, input current $(I_l)_{l=1}^L$ and contact impedances $(z_l)_{l=1}^L$. The solution comprises the function $v:D\rightarrow\mathbb{R}$ \emph{and} the vector $V \in \mathbb{R}^L$ of voltages. Also note that solutions to this equation are only defined up to addition of a constant: if $(v,V)$ solves the equation, then so does $(v+c,V+c)$ for any $c \in \mathbb{R}$. This is because it is necessary to choose a reference ground voltage.

\begin{figure}[h!]
\begin{center}
\includegraphics[width=0.5\textwidth, trim= 1cm 1cm 1cm 1cm]{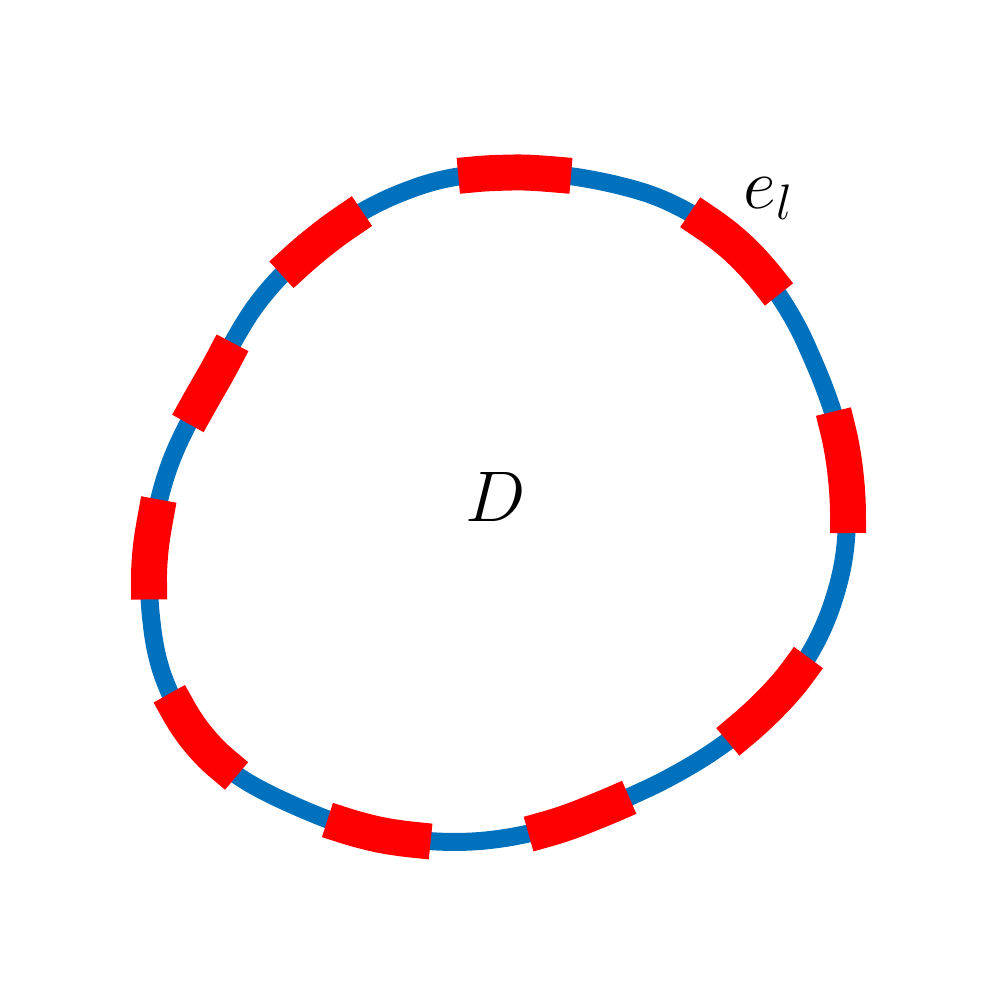}
\end{center}
\caption{An example domain $D$ with electrodes $(e_l)_{l=1}^L$ attached to its boundary.}
\end{figure}

\subsection{Weak Formulation}
\label{ssec:wcem}
We first define the space in which the solution to equation (\ref{eq:strongpde}) will live. Using the notation of \cite{cheney} we set 
\begin{align*}
\mathbb{H} &= H^1(D)\oplus\mathbb{R}^L,\\
 \|(v,V)\|_{\mathbb{H}}^2 &= \|v\|_{H^1(D)}^2 + \|V\|_{\ell^2}^2\\
 &= \|v\|_{L^2(D)}^2 + \|\nabla v\|_{L^2(D)}^2 + \|V\|_{\ell^2}^2.
\end{align*}
Since solutions are only defined up to addition of a constant, we define the quotient space $(\dot{\mathbb{H}},\|\cdot\|_{\dot{\mathbb{H}}})$ by
\begin{align*}
\dot{\mathbb{H}} &= \mathbb{H}/\mathbb{R},\\
\|(v,V)\|_{\dot{\mathbb{H}}} &= \inf_{c \in \mathbb{R}} \|(v - c, V - c)\|_{\mathbb{H}}.
\end{align*}
We will often use the notation $v' = (v,V)$ for brevity. It is more convenient to equip $\dot{\mathbb{H}}$ with an equivalent norm, as stated in the following lemma from \cite{cheney}:
\begin{lemma}
\label{lem:normequiv}
Define $\|v'\|_*$ by
\[
\|v'\|_*^2 = \|\nabla v\|_{L^2(D)}^2 + \sum_{l=1}^L\int_{e_l} |v(x)-V_l|^2\,\dee S.
\]
Then $\|\cdot\|_*$ and $\|\cdot\|_{\dot{\mathbb{H}}}$ are equivalent.
\end{lemma}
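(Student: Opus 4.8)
The plan is to produce a two-sided bound: there exist constants $c,C>0$ such that $c\|v'\|_{\dot{\mathbb{H}}}\le\|v'\|_*\le C\|v'\|_{\dot{\mathbb{H}}}$ for every $v'=(v,V)\in\dot{\mathbb{H}}$. First one must check that $\|\cdot\|_*$ actually descends to a well-defined norm on the quotient. Well-definedness is immediate: passing from $(v,V)$ to $(v+a,V+a)$ changes neither $\nabla v$ nor any difference $v(x)-V_l$, so $\|\cdot\|_*$ is constant on cosets. Positive-definiteness uses connectedness of $D$: if $\|v'\|_*=0$ then $\nabla v\equiv 0$ forces $v\equiv a$ for some constant $a$, and then $\int_{e_l}|a-V_l|^2\,\dee S=0$ together with $|e_l|>0$ gives $V_l=a$ for all $l$, i.e. $v'=0$ in $\dot{\mathbb{H}}$.

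For the upper bound I would work with an arbitrary representative $(v,V)$. One has $\|\nabla v\|_{L^2(D)}\le\|v\|_{H^1(D)}$, while for the boundary terms the elementary inequality $|v(x)-V_l|^2\le 2|v(x)|^2+2|V_l|^2$ combined with the $L^2$ trace inequality $\|v\|_{L^2(\partial D)}\le C_{\mathrm{tr}}\|v\|_{H^1(D)}$ (valid since $\partial D$ is smooth) yields $\sum_{l=1}^L\int_{e_l}|v-V_l|^2\,\dee S\le 2C_{\mathrm{tr}}^2\|v\|_{H^1(D)}^2+2\bigl(\max_l|e_l|\bigr)\|V\|_{\ell^2}^2$. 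Hence $\|v'\|_*^2\le C\|(v,V)\|_{\mathbb{H}}^2$ with $C$ independent of the representative, and since the left side does not depend on the choice of representative, taking the infimum over $c\in\mathbb{R}$ on the right gives $\|v'\|_*\le C^{1/2}\|v'\|_{\dot{\mathbb{H}}}$.

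The substantive direction is $c\|v'\|_{\dot{\mathbb{H}}}\le\|v'\|_*$, and this is where a Poincaré inequality is essential. Given $v'$, choose the distinguished representative $(v,V)$ with $\int_D v\,\dee x=0$, so that $\|v'\|_{\dot{\mathbb{H}}}^2\le\|(v,V)\|_{\mathbb{H}}^2=\|v\|_{L^2(D)}^2+\|\nabla v\|_{L^2(D)}^2+\|V\|_{\ell^2}^2$. The Poincaré--Wirtinger inequality gives $\|v\|_{L^2(D)}\le C_P\|\nabla v\|_{L^2(D)}$, hence $\|v\|_{H^1(D)}\le(1+C_P)\|\nabla v\|_{L^2(D)}\le(1+C_P)\|v'\|_*$. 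To control $V$, fix $l$ and estimate $|e_l|^{1/2}|V_l|=\|V_l\|_{L^2(e_l)}\le\|V_l-v\|_{L^2(e_l)}+\|v\|_{L^2(e_l)}\le\|v'\|_*+C_{\mathrm{tr}}\|v\|_{H^1(D)}$, so $\|V\|_{\ell^2}\le C\|v'\|_*$ with $C$ depending only on $L$, $\min_l|e_l|$, $C_{\mathrm{tr}}$ and $C_P$. Combining the three estimates proves $\|v'\|_{\dot{\mathbb{H}}}\le c^{-1}\|v'\|_*$.

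The only real obstacle is this second direction, and it rests on the two classical facts used there --- the $L^2$ trace theorem and the Poincaré--Wirtinger inequality on $D$ --- both available under the standing hypotheses that $D$ is bounded, connected, with smooth boundary; one also needs each $e_l$ to have positive surface measure, which is built into the model. An alternative, slightly softer route is a compactness--contradiction argument: a normalised sequence $v_n'$ with $\|v_n'\|_{\dot{\mathbb{H}}}=1$ and $\|v_n'\|_*\to 0$ would, on mean-zero representatives, have $\nabla v_n\to 0$ in $L^2(D)$, hence $v_n\to 0$ in $H^1(D)$ by Poincaré, hence $v_n\to 0$ in $L^2(\partial D)$ by the trace theorem, which forces $V_n\to 0$ and thus $\|v_n'\|_{\dot{\mathbb{H}}}\to 0$, a contradiction; I would nonetheless prefer the explicit estimate above since it is elementary and gives quantitative constants.
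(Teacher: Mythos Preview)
Your argument is correct: the trace theorem plus Poincar\'e--Wirtinger on a mean-zero representative is exactly the right mechanism, and both directions go through as you wrote them. Note, however, that the paper does not actually give a proof of this lemma --- it simply quotes the result from \cite{cheney} (Somersalo, Cheney and Isaacson), where the same argument appears. So there is no ``paper's own proof'' to compare against here; your proposal supplies what the paper omits, and does so along the same lines as the original reference. One small caveat: the paper only assumes $D$ is bounded open with smooth boundary, not explicitly connected, so the Poincar\'e step (and positive-definiteness) tacitly relies on connectedness of $D$; this is physically implicit for a ``body'' but worth flagging if you want the argument to be self-contained.
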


We can now state the weak formulation of the problem as derived in \cite{cheney}. For this let $w' = (w,W)$.
\begin{proposition}
\label{lem:weakpde}
Let $B:\dot{\mathbb{H}}\times\dot{\mathbb{H}}\rightarrow\mathbb{R}$ and $r:\dot{\mathbb{H}}\rightarrow\mathbb{R}$ be defined by
\begin{align*}
B(v',w';\sigma) &= \int_D \sigma\nabla v\cdot\nabla w\,\dee x + \sum_{l=1}^L\frac{1}{z_l}\int_{e_l} (v-V_l)(w-W_l)\,\dee S,\\
r(w') &= \sum_{l=1}^L I_l W_l.
\end{align*}
Then if $v' \in \dot{\mathbb{H}}$ is a strong solution to the problem (\ref{eq:strongpde}), it satisfies
\begin{align}
\label{wpde}
B(v',w';\sigma) = r(w')\;\;\;\text{for all }w' \in \dot{\mathbb{H}}.
\end{align}
\end{proposition}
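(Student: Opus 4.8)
The plan is to derive the weak identity in the classical way. Fix an arbitrary test element $w'=(w,W)\in\dot{\mathbb{H}}$, multiply the first equation of (\ref{eq:strongpde}) by its $H^1(D)$-component $w$, integrate over $D$, and apply Green's identity to transfer one derivative onto $w$. This produces
\[
\int_D \sigma\nabla v\cdot\nabla w\,\dee x - \int_{\partial D}\sigma\frac{\partial v}{\partial n}\,w\,\dee S = 0,
\]
where the boundary integral is understood via the trace of $w$. This step is the only place where regularity enters: it uses that a strong solution $v'$ is regular enough for $\sigma\,\partial v/\partial n$ to make sense on $\partial D$ and for the divergence theorem to apply, which is part of what is meant by ``strong solution''.

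Next I would eliminate the boundary term using the remaining three equations of (\ref{eq:strongpde}). On $\partial D\setminus\bigcup_{l} e_l$ the homogeneous Neumann condition annihilates the integrand, so only the electrode pieces survive. On each $e_l$ the Robin-type condition $v+z_l\sigma\,\partial v/\partial n=V_l$ gives $\sigma\,\partial v/\partial n = z_l^{-1}(V_l-v)$; substituting yields
\[
\int_D \sigma\nabla v\cdot\nabla w\,\dee x + \sum_{l=1}^L \frac{1}{z_l}\int_{e_l}(v-V_l)\,w\,\dee S = 0.
\]
To match $B(v',w';\sigma)$ exactly I then add and subtract $\sum_{l} z_l^{-1}\int_{e_l}(v-V_l)\,W_l\,\dee S$: the added term completes the electrode sum to $\sum_{l} z_l^{-1}\int_{e_l}(v-V_l)(w-W_l)\,\dee S$, so the left-hand side becomes $B(v',w';\sigma)$; the subtracted term is rewritten, using $z_l^{-1}(V_l-v)=\sigma\,\partial v/\partial n$ on $e_l$ together with the current condition $\int_{e_l}\sigma\,\partial v/\partial n\,\dee S = I_l$, as $\sum_{l} W_l I_l = r(w')$. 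Rearranging gives $B(v',w';\sigma)=r(w')$.

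Two points I would check for completeness rather than treat as real difficulties. First, since we work on the quotient $\dot{\mathbb{H}}$, both sides must be independent of the chosen representative: $B$ involves $v',w'$ only through $\nabla v$, $\nabla w$ and the differences $v-V_l$, $w-W_l$, hence descends to the quotient, while $r(w')=\sum_l I_l W_l$ is invariant under $W\mapsto W+c$ precisely because the input currents satisfy the charge-conservation condition $\sum_l I_l=0$ assumed for the CEM. Second, the identity is asserted for all $w'\in\dot{\mathbb{H}}$, so one notes that the computation, valid for smooth $w$, extends to general $w\in H^1(D)$ by density together with continuity of the trace map $H^1(D)\to L^2(\partial D)$ and the boundedness of $\sigma$. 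The only genuinely delicate ingredient is thus the validity of Green's identity for the strong solution, which is exactly what the strong-solution hypothesis supplies; the rest is bookkeeping.
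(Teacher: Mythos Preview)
Your proof is correct and follows essentially the same route as the paper's own argument: multiply the interior equation by $w$, integrate by parts, use the homogeneous Neumann condition off the electrodes and the Robin condition on them, and then add and subtract the $W_l$ terms to complete the electrode sum to $B$ while identifying the remainder as $\sum_l W_l I_l = r(w')$ via the current condition. Your additional checks on well-definedness over the quotient (using $\sum_l I_l = 0$ for $r$) and on the density extension to all of $H^1(D)$ are careful extras that the paper's proof does not make explicit, but the core computation is the same.
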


We will use the weak formulation (\ref{wpde}) to define our forward map for the complete electrode model (\ref{eq:strongpde}). In order to guarantee a solution to this problem, we make the following assumptions.
\begin{assumptions}
\label{assump:exist}
The conductivity $\sigma:\overline{D}\rightarrow\mathbb{R}$, the contact impedances $(z_l)_{l=1}^L \in \mathbb{R}^L$ and the current stimulation pattern $(I_l)_{l=1}^L \in \mathbb{R}^L$ satisfy
\begin{enumerate}[(i)]
\item $\displaystyle\sigma \in L^{\infty}(\overline{D};\mathbb{R}),\;\;\;\underset{x \in D}{\mathrm{essinf}}\;\sigma(x) = \sigma_- > 0$;
\item $\displaystyle 0 < z_- \leq z_l \leq z_+ < \infty,\;\;\;l=1,\ldots,L$;
\item $\displaystyle \sum_{l=1}^L I_l = 0$.
\end{enumerate}
\end{assumptions}
Under these assumptions, existence of a unique solution to (\ref{wpde}) is proved in \cite{cheney} and stated here for convenience:
\begin{proposition}
Let Assumptions \ref{assump:exist} hold, then there is a unique $[(v,V)] \in \dot{\mathbb{H}}$ solving (\ref{wpde}). We may, without loss of generality, choose the element $(v,V) \in [(v,V)]$ of the equivalence class of solutions to be that which satisfies
\begin{align}
\label{groundvoltage}
\sum_{l=1}^L V_l = 0.
\end{align}
\end{proposition}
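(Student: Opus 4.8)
The plan is to recognise \eqref{wpde} as a variational problem on the Hilbert space $(\dot{\mathbb{H}},\|\cdot\|_*)$ and apply the Lax--Milgram lemma; Lemma \ref{lem:normequiv} allows us to work with the norm $\|\cdot\|_*$ throughout, which is where coercivity becomes transparent. The first point to dispatch is that $B(\cdot,\cdot;\sigma)$ and $r$ genuinely descend to the quotient $\dot{\mathbb{H}}$: replacing $v'=(v,V)$ by $(v+c,V+c)$ leaves $\nabla v$ and every difference $v-V_l$ unchanged, so $B$ is unaffected by adding constants in either argument, while $r\big((w,W)+c\big)=r(w,W)+c\sum_{l=1}^L I_l = r(w,W)$ by Assumptions \ref{assump:exist}(iii). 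Hence both forms are well defined on $\dot{\mathbb{H}}$, respectively $\dot{\mathbb{H}}\times\dot{\mathbb{H}}$, and Proposition \ref{lem:weakpde} shows that \eqref{wpde} is the correct weak formulation of \eqref{eq:strongpde}.

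Next I would verify the three Lax--Milgram hypotheses. For boundedness of $B$, estimate the volume term by $\|\sigma\|_{L^\infty(D)}\|\nabla v\|_{L^2(D)}\|\nabla w\|_{L^2(D)}$ and the electrode terms by $z_-^{-1}\sum_l\|v-V_l\|_{L^2(e_l)}\|w-W_l\|_{L^2(e_l)}$; a Cauchy--Schwarz in the index $l$ then yields $|B(v',w';\sigma)|\le \max\!\big(\|\sigma\|_{L^\infty(D)},z_-^{-1}\big)\,\|v'\|_*\|w'\|_*$. For coercivity, since $\sigma\ge\sigma_->0$ a.e.\ and $z_l\le z_+$,
\[
B(v',v';\sigma)\ge \sigma_-\|\nabla v\|_{L^2(D)}^2 + z_+^{-1}\sum_{l=1}^L\|v-V_l\|_{L^2(e_l)}^2 \ge \min\!\big(\sigma_-,z_+^{-1}\big)\,\|v'\|_*^2 .
\]
For boundedness of $r$, using $\sum_l I_l=0$ I would write $r(w')=\sum_l I_l(W_l-c)$ for every $c\in\mathbb{R}$, so that $|r(w')|\le \|I\|_{\ell^2}\inf_{c\in\mathbb{R}}\|(w-c,W-c)\|_{\mathbb{H}} = \|I\|_{\ell^2}\|w'\|_{\dot{\mathbb{H}}}$, which by the equivalence of $\|\cdot\|_{\dot{\mathbb{H}}}$ and $\|\cdot\|_*$ from Lemma \ref{lem:normequiv} is bounded by a constant times $\|w'\|_*$. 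Lax--Milgram then delivers a unique $[(v,V)]\in\dot{\mathbb{H}}$ with $B(v',w';\sigma)=r(w')$ for all $w'\in\dot{\mathbb{H}}$.

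For the normalisation, note that the members of the equivalence class $[(v,V)]$ are precisely the pairs $(v-c,V-c)$, $c\in\mathbb{R}$; choosing $c=\tfrac1L\sum_{l=1}^L V_l$ produces the representative obeying \eqref{groundvoltage}, and this representative is clearly unique within the class. I do not anticipate a serious obstacle: the argument is a textbook Lax--Milgram application (indeed $B$ is symmetric, so Riesz representation suffices). The one place that requires genuine care — and the reason the precise conditions in Assumptions \ref{assump:exist} are imposed — is the boundedness of $r$ on the quotient space, which simultaneously needs the current-conservation condition $\sum_l I_l=0$, so that $r$ is well defined on $\dot{\mathbb{H}}$ in the first place, and the norm equivalence of Lemma \ref{lem:normequiv}, so that it is continuous in the working norm $\|\cdot\|_*$.
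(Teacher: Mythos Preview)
Your Lax--Milgram argument is correct and complete. Note, however, that the paper does not actually prove this proposition: it is quoted from \cite{cheney} (``existence of a unique solution to (\ref{wpde}) is proved in \cite{cheney} and stated here for convenience''), and what you have written is precisely the Lax--Milgram verification carried out in that reference.
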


\begin{remark}
Assumptions \ref{assump:exist} (i) and (ii) are to ensure coercivity and boundedness of $B(\cdot,\cdot;\sigma)$. Assumption \ref{assump:exist} (iii) is necessary for continuity of  $r(\cdot)$, and physically may be thought of as a conservation of charge condition. Choosing a solution from the equivalence class corresponds to choosing a reference ground voltage.
\end{remark}

\subsection{Continuity of the Forward Map}
\label{ssec:fwd}
In what follows we will restrict to the set of admissible conductivities, which is defined as follows.
\begin{definition}
A conductivity field $\sigma:\overline{D}\rightarrow\mathbb{R}$ is said to be admissible if
\begin{enumerate}[(i)]
\item there exists $N \in \mathbb{N}$, $\{D_n\}_{n=1}^N$ open disjoint subsets of $D$ for which $\overline{D} = \bigcup_{n=1}^N \overline{D}_j$;
\item $\sigma\big|_{D_j} \in C(\overline{D}_j)$; and
\item there exist $\sigma^-, \sigma^+ \in (0,\infty)$ such that $\sigma^- \leq \sigma(x) \leq \sigma^+$ for all $x \in \overline{D}$.
\end{enumerate}
The set of all such conductivities will be denoted $\mathcal{A}(D)$.
\end{definition}

Note that any $\sigma \in \mathcal{A}(D)$ will satisfy Assumptions \ref{assump:exist}(i). Assume that the current stimulation pattern $(I_l)_{l=1}^L \in \mathbb{R}^L$ and contact impedances $(z_l)_{l=1}^L \in \mathbb{R}^L$ are known and satisfy Assumptions \ref{assump:exist}(ii)-(iii). Then we may define the solution map $\mathcal{M}:\mathcal{A}(D)\rightarrow\mathbb{H}$ to be the unique solution to (\ref{wpde}) satisfying (\ref{groundvoltage}). The above existence and uniqueness result tells us that this map is well-defined.

In \cite{frechet} it is shown that $\mathcal{M}:\mathcal{A}(D)\rightarrow\mathbb{H}$ is Fr\'echet differentiable when we equip $\mathcal{A}(D)$ with the supremum norm. Though this is a strong result, this choice of norm is not appropriate for all of the conductivities that we will be considering. We hence establish the following continuity result.

\begin{proposition}
\label{prop:fwdcont}
Fix a current stimulation pattern $(I_l)_{l=1}^L \in \mathbb{R}^L$ and contact impedances $(z_l)_{l=1}^L \in \mathbb{R}^L$ satisfying Assumptions \ref{assump:exist}. Define the solution map $\mathcal{M}:\mathcal{A}(D)\rightarrow\mathbb{H}$ as above. Let $\sigma \in \mathcal{A}(D)$ and let $(\sigma_\eps)_{\eps > 0} \subseteq \mathcal{A}(D)$ be such that either
\begin{enumerate}[(i)]
\item $\sigma_\eps$ converges to $\sigma$ uniformly; or
\item $\sigma_\eps$ converges to $\sigma$ in \mmd{(Lebesgue)} measure, and there exist $\sigma^-,\sigma^+ \in (0,\infty)$ such that for all $\eps > 0$ and $x \in D$, $\sigma^- \leq \sigma_\eps(x) \leq \sigma^+$.
\end{enumerate}
Then $\|\mathcal{M}(\sigma_\eps) - \mathcal{M}(\sigma)\|_{*}\rightarrow 0$.
\end{proposition}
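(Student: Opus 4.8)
The plan is to run the standard Galerkin‑orthogonality/energy argument for the coefficient–dependence of an elliptic weak solution, producing an explicit bound on $\|\mathcal{M}(\sigma_\eps)-\mathcal{M}(\sigma)\|_*$ in terms of $\sigma_\eps-\sigma$, and then to pass to the limit. Write $v'=(v,V)=\mathcal{M}(\sigma)$, $v'_\eps=(v_\eps,V_\eps)=\mathcal{M}(\sigma_\eps)$ and $e'=(e,E)=v'_\eps-v'$. Since $B(v',w';\sigma)=r(w')=B(v'_\eps,w';\sigma_\eps)$ for all $w'\in\dot{\mathbb{H}}$, and since the contact–impedance boundary terms in $B$ do not involve the conductivity, subtracting and using bilinearity of $B$ in its first argument gives
\[
B(e',w';\sigma_\eps)=\int_D(\sigma-\sigma_\eps)\nabla v\cdot\nabla w\,\dee x\qquad\text{for all }w'\in\dot{\mathbb{H}}.
\]
Taking $w'=e'$ and using coercivity of $B(\cdot,\cdot;\sigma_\eps)$ — which is immediate from the definition of $\|\cdot\|_*$, namely $B(e',e';\sigma_\eps)=\int_D\sigma_\eps|\nabla e|^2\,\dee x+\sum_l\frac{1}{z_l}\int_{e_l}|e-E_l|^2\,\dee S\ge c_\eps\|e'\|_*^2$ with $c_\eps=\min\{\,\mathrm{essinf}\,\sigma_\eps,\,1/z_+\}$ — and then Cauchy--Schwarz on the right, absorbing $\|\nabla e\|_{L^2(D)}\le\|e'\|_*$, yields
\[
c_\eps\,\|e'\|_*\le\Big(\int_D|\sigma-\sigma_\eps|^2\,|\nabla v|^2\,\dee x\Big)^{1/2}.
\]
(The seminorm $\|\cdot\|_*$ is invariant under addition of constants, so this is unambiguous for $e'\in\mathbb{H}$; by Lemma \ref{lem:normequiv} it is a genuine norm on $\dot{\mathbb{H}}$.)

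Next one checks that $c_\eps$ is bounded below uniformly. In case (ii) this is immediate since $\sigma_\eps\ge\sigma^->0$. In case (i), uniform convergence gives $\mathrm{essinf}\,\sigma_\eps\ge\mathrm{essinf}\,\sigma-\|\sigma-\sigma_\eps\|_{L^\infty}\ge\tfrac12\,\mathrm{essinf}\,\sigma>0$ for all sufficiently small $\eps$, so $c_\eps\ge c>0$ for $\eps$ in the relevant range. Hence in both cases there is a constant $c>0$ (independent of $\eps$, for $\eps$ small) with $\|e'\|_*\le c^{-1}\big(\int_D|\sigma-\sigma_\eps|^2|\nabla v|^2\,\dee x\big)^{1/2}$, and it remains only to show the right‑hand side tends to $0$; here $v$ is a fixed element of $H^1(D)$, so $|\nabla v|^2\in L^1(D)$.

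In case (i) this is trivial: $\int_D|\sigma-\sigma_\eps|^2|\nabla v|^2\,\dee x\le\|\sigma-\sigma_\eps\|_{L^\infty}^2\,\|\nabla v\|_{L^2(D)}^2\to0$. Case (ii) is the only genuine obstacle, since there we have merely convergence in measure and so cannot pull out a sup norm. The resolution is the convergence‑in‑measure form of the dominated convergence theorem: the integrands $f_\eps:=|\sigma-\sigma_\eps|^2|\nabla v|^2$ converge to $0$ in Lebesgue measure (being a fixed $L^1$ function times $|\sigma-\sigma_\eps|^2\to0$ in measure), and, after enlarging $[\sigma^-,\sigma^+]$ if necessary so that it also bounds $\sigma$, they are dominated by the fixed function $(\sigma^+-\sigma^-)^2|\nabla v|^2\in L^1(D)$; passing to an arbitrary subsequence, extracting a further subsequence converging almost everywhere, and applying ordinary dominated convergence shows $\int_D f_\eps\,\dee x\to0$ along it, hence along the whole family. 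Therefore $\|\mathcal{M}(\sigma_\eps)-\mathcal{M}(\sigma)\|_*\to0$ in both cases. The two points requiring care — already isolated above — are the uniform lower bound on the coercivity constant in case (i) and the convergence‑in‑measure version of dominated convergence in case (ii); the rest is routine.
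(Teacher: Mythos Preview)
Your proof is correct and follows essentially the same approach as the paper: subtract the two weak formulations, test with the error $e'=v'_\eps-v'$, use coercivity of $B(\cdot,\cdot;\sigma_\eps)$ together with Cauchy--Schwarz to obtain $\|e'\|_*\le c^{-1}\big(\int_D|\sigma-\sigma_\eps|^2|\nabla v|^2\,\dee x\big)^{1/2}$, and conclude via the sup-norm bound in case (i) and a subsequence/dominated-convergence argument in case (ii). You are slightly more explicit than the paper about the uniform lower bound on the coercivity constant in case (i) and about enlarging $[\sigma^-,\sigma^+]$ to also cover $\sigma$ in case (ii), but the argument is otherwise the same.
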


\begin{proof}
Define the maps $B:\mathbb{H}\times\mathbb{H}\times\mathcal{A}(D)\rightarrow\mathbb{R}$ and $r:\mathbb{H}\rightarrow\mathbb{R}$ as in Lemma \ref{lem:weakpde}, but on $\mathbb{H}$ rather than $\dot{\mathbb{H}}$. Then denoting $u_\eps' = (v_\eps,V^\eps) = \mathcal{M}(\sigma_\eps)$ and $v' = (v,V) = \mathcal{M}(\sigma)$, we have for all $w' \in \mathbb{H}$,
\[
B(v_\eps',w';\sigma_\eps) = r(w'),\;\;\;B(v',w';\sigma) = r(w').
\]
It follows that
\begin{align*}
0 &= B(v_\eps',w';\sigma_\eps) - B(v',w';\sigma_\eps) + B(v',w';\sigma_\eps) - B(v',w';\sigma)\\
&= \int_D \sigma_\eps\nabla(v_\eps - v)\cdot\nabla w\,\dee x + \sum_{l=1}^L \frac{1}{z_l}\int_{e_l}((v_\eps - v) - (V^\eps_l - V_l))(w-W_l)\,\dee S\\
&\hspace{1cm}+ \int_D (\sigma_\eps - \sigma)\nabla v\cdot \nabla w\,\dee x.
\end{align*}
Letting $w' = (v_\eps-v,V^\eps - V)$, we see that
\begin{align*}
\int_D \sigma_\eps |\nabla (v_\eps - v)|^2\,\dee x + \sum_{l=1}^L \frac{1}{z_l}\int_{e_l}((v_\eps - v) &- (V^\eps_l - V_l))^2\,\dee S\\
&\leq \int_D |\sigma_\eps - \sigma||\nabla v\cdot \nabla (v_\eps - v)|\,\dee x.
\end{align*}
In both cases (i) and (ii), we have that $(\sigma_\eps)_{\eps>0}$ is bounded uniformly below by a positive constant. Hence for small enough $\eps$, the left hand side above may be bounded below by $C\|v_\eps' - v'\|_*^2$. We then have by Cauchy-Schwarz
\begin{align}
\|v_\eps'-v'\|_*^2 &\leq C\int_D |\sigma_\eps - \sigma||\nabla v\cdot \nabla (v_\eps - v)|\,\dee x\notag\\
&\leq C\left(\int_D |\sigma_\eps - \sigma|^2|\nabla v|^2\,\dee x\right)^{1/2}\cdot \|\nabla(v_\eps - v)\|_{L^2}\notag\\
&\label{eq:propcts1}\leq C\left(\int_D |\sigma_\eps - \sigma|^2|\nabla v|^2\,\dee x\right)^{1/2}\cdot \|v_\eps' - v'\|_{*}\\
&\label{eq:propcts2}\leq C\|\sigma_\eps-\sigma\|_\infty\|\nabla v\|_{L^2}\|v_\eps' - v'\|_{*}.
\end{align}
If $\sigma_\eps\rightarrow\sigma$ uniformly, we deduce from (\ref{eq:propcts2}) that $\|v_\eps'-v'\|_*\rightarrow 0$ and the result follows. If $|\sigma_\eps - \sigma|\rightarrow 0$ in measure, then since $|D|<\infty$, it follows that the integrand in (\ref{eq:propcts1}) tends to zero in measure, see for example Corollary 2.2.6 in \cite{bogachev1}. Since $\sigma_\eps$ is assumed to be uniformly bounded, the integrand is dominated by a scalar multiple of the integrable function $|\nabla v|^2$. We claim that this implies that the integrand tends to zero in $L^1$. Suppose not, and denote the integrand $f_\eps$. Then there exists $\delta > 0$ and a subsequence $(f_{\eps_i})_{i\geq 1}$ such that $\|f_{\eps_i}\|_{L^1} \geq \delta$ for all $i$. This subsequence still converges to zero in measure, and so admits a further subsequence that converges to zero almost surely. An application of the dominated convergence theorem leads to a contradiction, hence we deduce that $f_\eps$ tends to zero in $L^1$ and the result follows.
\end{proof}

Denote the projection $\Pi:\mathbb{H}\rightarrow\mathbb{R}^L$, $(v,V)\mapsto V$. The following lemma shows that the above result still holds if we replace $\mathcal{M}$ by $\Pi\circ\mathcal{M}$.

\begin{corollary}
\label{cor:fwdcont}
Let the assumptions of Proposition \ref{prop:fwdcont} hold. Then
\[
|\Pi\circ\mathcal{M}(\sigma_\eps) - \Pi\circ\mathcal{M}(\sigma)|_{\ell^2}\rightarrow 0.
\]
\end{corollary}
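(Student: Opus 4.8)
The plan is to obtain the claim directly from Proposition~\ref{prop:fwdcont} and Lemma~\ref{lem:normequiv}, with the ground voltage normalisation (\ref{groundvoltage}) doing the work of converting control of the seminorm $\|\cdot\|_*$ into control of the voltage vectors alone. Write $(v_\eps,V^\eps)=\mathcal{M}(\sigma_\eps)$ and $(v,V)=\mathcal{M}(\sigma)$, and put $w'=(w,W):=(v_\eps-v,\,V^\eps-V)\in\mathbb{H}$, so that $\Pi\circ\mathcal{M}(\sigma_\eps)-\Pi\circ\mathcal{M}(\sigma)=W$. First I would note that $\|\cdot\|_*$ is invariant under the replacement $(v,V)\mapsto(v+c,V+c)$, $c\in\mathbb{R}$, hence descends to a function on the quotient $\dot{\mathbb{H}}$ with $\|[w']\|_*=\|w'\|_*$; by Proposition~\ref{prop:fwdcont}, $\|w'\|_*\to0$ as $\eps\to0$. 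It therefore suffices to bound $|W|_{\ell^2}$ by a fixed multiple of $\|w'\|_*$.

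Next I would pass from $W$ to the quotient norm of $w'$. By the definition of $\|\cdot\|_{\dot{\mathbb{H}}}$,
\begin{align*}
\|[w']\|_{\dot{\mathbb{H}}}^2 &= \inf_{c\in\mathbb{R}}\Bigl(\|w-c\|_{L^2(D)}^2+\|\nabla w\|_{L^2(D)}^2+|W-c|_{\ell^2}^2\Bigr)\\
&\geq \inf_{c\in\mathbb{R}}|W-c|_{\ell^2}^2 = \inf_{c\in\mathbb{R}}\sum_{l=1}^L(W_l-c)^2 .
\end{align*}
Since $\mathcal{M}(\sigma_\eps)$ and $\mathcal{M}(\sigma)$ both satisfy (\ref{groundvoltage}), we have $\sum_{l=1}^L W_l=\sum_{l=1}^L V^\eps_l-\sum_{l=1}^L V_l=0$, so the minimising $c$ above is $c=0$ and the right-hand side equals $|W|_{\ell^2}^2$; hence $|W|_{\ell^2}\leq\|[w']\|_{\dot{\mathbb{H}}}$.

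Finally I would combine the previous two steps with the norm equivalence of Lemma~\ref{lem:normequiv}: there is a constant $C$, independent of $\eps$, with $\|[w']\|_{\dot{\mathbb{H}}}\leq C\|[w']\|_*=C\|w'\|_*$, and therefore
\begin{align*}
|\Pi\circ\mathcal{M}(\sigma_\eps)-\Pi\circ\mathcal{M}(\sigma)|_{\ell^2} = |W|_{\ell^2} &\leq \|[w']\|_{\dot{\mathbb{H}}}\\
&\leq C\|\mathcal{M}(\sigma_\eps)-\mathcal{M}(\sigma)\|_* \to 0 ,
\end{align*}
which is the assertion.

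There is essentially no analytic obstacle here: the real content is carried entirely by Proposition~\ref{prop:fwdcont} and Lemma~\ref{lem:normequiv}. The only point that must be handled with a little care is the bookkeeping around the quotient — checking that $\|\cdot\|_*$ is genuinely a function on $\dot{\mathbb{H}}$, and recognising that it is precisely the normalisation (\ref{groundvoltage}) that forces $\sum_l W_l=0$ and so allows the infimum over $c$ to be dropped at no cost. Without (\ref{groundvoltage}) the voltages would be determined only up to a common additive constant, and the stated convergence would in fact be false.
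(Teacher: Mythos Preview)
Your proof is correct and follows essentially the same route as the paper's: both use the norm equivalence of Lemma~\ref{lem:normequiv} to pass from $\|\cdot\|_*$ to the quotient norm, bound the latter below by $\inf_{c}|W-c|_{\ell^2}$, and then use the ground voltage normalisation (\ref{groundvoltage}) to identify the minimiser as $c=0$. Your version is a touch more explicit about why $\|\cdot\|_*$ descends to the quotient, but the argument is the same.
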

\begin{proof}
We show that there exists $C > 0$ such that for all $(v,V) \in \mathbb{H}$ with $\sum_{l=1}^L V_l = 0$, $\|(v,V)\|_* \geq C|V|_{\ell^2}$. By the equivalence of $\|\cdot\|_*$ and $\|\cdot\|_{\dot{\mathbb{H}}}$, Lemma \ref{lem:normequiv}, we have
\[
\|(v,V)\|_* \geq C\inf_{c \in \mathbb{R}}\left(\|v-c\|_{H^1} + |V-c|_{\ell^2}\right) \geq C\inf_{c\in\mathbb{R}}|V-c|_{\ell^2}
\]
The infimum on the right-hand side is attained at
\[
c = \frac{1}{L}\sum_{l=1}^L V_l = 0.
\]
Then by Proposition \ref{prop:fwdcont}, we have
\[
0 \leq |\Pi\circ\mathcal{M}(\sigma_\eps) - \Pi\circ\mathcal{M}(\sigma)|_{\ell^2} \leq \|\mathcal{M}(\sigma_\eps) - \mathcal{M}(\sigma)\|_{*}\rightarrow 0
\]
\end{proof}

\section{The Inverse Problem}
\label{sec:post}
We are interested in the inverse problem of determining the conductivity field from measurements of the voltages $(V_l)_{l=1}^L$ on the boundary, for a variety of input currents $(I_l)_{l=1}^L$ on the boundary. To this end we introduce the following version of Ohm's law. Observe that the mapping $I\mapsto v'$, taking the current stimulation pattern to the solution of (\ref{wpde}), is linear. Then given a conductivity field $\sigma \in \mathcal{A}(D)$, there exists a resistivity matrix  $R(\sigma) \in \mathbb{R}^{L\times L}$ such that the boundary voltage measurements $V(\sigma)$ arising from the solution of the forward model are related to $I$ via
\[
V(\sigma) = R(\sigma)I
\]
By applying several different current stimulation patterns we should be able to infer more about the conductivity $\sigma$. Note however that since the mapping $I\mapsto V$ is linear, only linearly independent stimulation patterns will provide more information\footnote{If there is noise on the measurements, additional linearly dependent observations can be made to effectively reduce the noise level on the original measurements. We can assume that this has been done and scale the noise appropriately.}. Since we have the conservation of charge condition on $I$, there are at most $L-1$ linearly independent patterns we can use. 

Assume that $J$ linearly independent current patterns $I^{(j)} \in \mathbb{R}^L$, $j=1,\ldots,J$, $J \leq L-1$ are applied, and noisy measurements of $V^{(j)} = R(\sigma)I^{(j)}$ are made:
\[
y_j = V^{(j)} + \eta_j,\;\;\;\eta_j \sim N(0,\Gamma_0)\text{ iid.}
\]
We have
\[
y_j = \mathcal{G}_j(\sigma) + \eta_j
\]
where $\mathcal{G}_j(\sigma) = R(\sigma)I^{(j)}$.
Concatenating these observations, we write
\begin{align*}
y = \mathcal{G}(\sigma) + \eta,\;\;\;&\eta \sim N(0,\Gamma)
\end{align*}
where $\Gamma = \mathrm{diag}(\Gamma_0,\ldots,\Gamma_0)$. The inverse problem is then to recover the conductivity field $\sigma$ from the data $y$. This problem is highly ill-posed: the data is finite dimensional, yet we wish to recover a function which, typically, lies in an infinite dimensional space. We take a Bayesian approach by placing a prior distribution on $\sigma$. The choice of prior may have significant effect on the resulting posterior distribution, and different choices of prior may be more appropriate depending upon the prior knowledge of the particular experimental set-up under consideration. 

In subsection \ref{ssec:prior} we outline three different families of prior models, and show the appropriate regularity of the forward maps arising from them. In subsection \ref{ssec:post} we describe the likelihood and posterior distribution formally, before rigorously proving that the posterior distribution exists and is Lipschitz with respect to the data in the Hellinger metric.

\subsection{Choices of Prior}
\label{ssec:prior}
In this section we consider three priors, labelled by $i=1,2,3$, defined by functions $F_i:X_i\rightarrow \mathcal{A}(D)$ which map draws from prior measures on the Banach spaces $X_i$ to the space of conductivities $\mathcal{A}(D)$. Our prior conductivity distributions will then be the pushfoward of the prior measures by these maps $F_i$. We describe these maps, and establish continuity properties of them needed for the study of the posterior later. \mmd{In what follows, the space $C_0(\overline{D})$ of continuous functions on $\overline{D}$ will be equipped with the supremum norm $\|\cdot\|_\infty$ and the corresponding Borel $\sigma$-algebra.}

\subsubsection{Log-Gaussian prior}
We first consider the simple case that the coefficient is given by the exponential of a continuous function. Let $F_1:C^0(\overline{D})\rightarrow \mathcal{A}(D)$ be defined by $F_1(u) = \exp(u)$. Then it is easily seen that $F_1$ does indeed map into $\mathcal{A}(D)$. Furthermore, since $D$ is bounded, if $u \in C^0(\overline{D})$ and $(u_\eps)_{\eps>0}\subseteq C^0(\overline{D})$ is a sequence such that $\|u_\eps - u\|_\infty\rightarrow 0$, then $\|F_1(u_\eps) - F_1(u)\|_\infty \rightarrow 0$.

In this case, we will take our prior measure $\mu_0$ on $u$ to be a \mmd{non-degenerate} Gaussian measure $N(m_0,\mathcal{C}_0)$ on $C^0(\overline{D})$. Note that the push forward of a Gaussian measure by $F_1$ is a log-Gaussian measure. 

\begin{example}
Consider the case $D = B(0,1) \subseteq \mathbb{R}^2$. Suppose that $u$ is drawn from a Gaussian measure $\mu_0 = N(0,\mathcal{C})$. Typical samples from $F_1^\#(\mu_0)$ are shown below\footnote{Given a measure $\mu$ on $(X,\mathcal{X})$ and a measurable map $F:(X,\mathcal{X})\rightarrow (Y,\mathcal{Y})$ between measurable spaces, $F^{\#}(\mu)$ denotes the pushforward of $\mu$ by $F$, i.e. the measure on $(Y,\mathcal{Y})$ given by $F^{\#}(\mu)(A) = \mu(F^{-1}(A))$ for all $A \in \mathcal{Y}$.}. The covariance $\mathcal{C}$ is chosen such that the samples $u$ almost surely have regularity \mmd{$u \in C^{\lfloor s \rfloor, s-\lfloor s \rfloor}(D)$} for all $s < t$, where from left to right $t = 2, 1.5, 1, 0.5$ respectively. Here the samples are generated on $[-1,1]^2\supseteq D$ and then restricted to $D$, for computational simplicity.

\begin{center}
\includegraphics[width=1\textwidth, trim= 4cm 0cm 4cm 0cm]{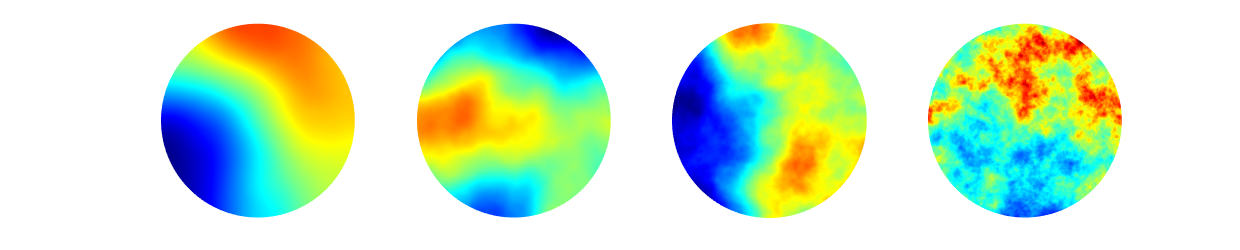}
\end{center}

\end{example}

\subsubsection{Star-shaped prior}
We now consider star-shaped inclusions, that is, inclusions parametrised by their centre and a radial function. These were studied in two-dimensions in the paper \cite{star} to parametrise domains for a Bayesian inverse shape scattering problem. In \cite{star} the authors prove well-posedness of the inverse problem in an infinite dimensional setting through the use of shape derivatives and Riesz-Fredholm theory.

Let $D \subseteq \mathbb{R}^d$, and $R_{d-1} = (-\pi,\pi]\times [0,\pi]^{d-2} \subseteq\mathbb{R}^{d-1}$. Let $h:\mathbb{R}^d\rightarrow R_{d-1}$ be the continuous function representing the mapping from Cartesian to angular polar coordinates. Define the mapping $A:C^0_P(R_{d-1})\times D \rightarrow\mathcal{B}(D)$ by
\[
A(r,x_0) = \left\{x \in D\;\big|\;|x-x_0| \leq r(h(x-x_0))\right\}
\]
where $C^0_P(R_{d-1})$ is the space of continuous periodic functions on $R_{d-1}$. Then $A(r,x_0)$ describes the set of points in $D$ which lie within the closed surface\linebreak parametrised in polar coordinates centred at $x_0$ by
\[
\Gamma(\Theta) = (\Theta,r(\Theta)),\;\;\;\Theta \in R_{d-1}.
\]
In two dimensions, we have $R_1 = (-\pi,\pi]$ and the mapping $h:\mathbb{R}^2\rightarrow R_1$ is given by
\[
h(x,y) = \mathrm{atan2}(y,x) \equiv 2\arctan\left(\frac{y}{\sqrt{x^2+y^2} + x}\right)
\]
where atan2 is the two-parameter inverse tangent function. 

In three dimensions, we have $R_2 = (-\pi,\pi]\times[0,\pi]$ and the mapping $h:\mathbb{R}^3\rightarrow R_2$ is given by
\[
h(x,y,z) = \left(\mathrm{atan2}(y,x),\mathrm{arccot}\left(\frac{z}{\sqrt{x^2+y^2}}\right)\right).
\]

Similar expressions for $h$ exist in higher dimensions, though for applications we are only interested in the case $d = 2, 3$.

Define now the map $F_2:C^0_P(R_{d-1})\times D\rightarrow \mathcal{A}(D)$ by
\begin{align*}
F_2(r,x_0) &= u_+\mathds{1}_{A(r,x_0)} + u_-\mathds{1}_{D\setminus A(r,x_0)}\\
&= (u_+ - u_-)\mathds{1}_{A(r,x_0)} + u_-,
\end{align*}
\mmd{where $u_+, u_- > 0$ are the scalar conductivity values}. Again it can easily be seen that $F_2$ does indeed map into $\mathcal{A}(D)$. We claim that this map is continuous in the following sense:

\begin{proposition}
\label{prop:starcts}
Define the map $F_2:C^0_P(R_{d-1})\times D\rightarrow \mathcal{A}(D)$ as above. Let $x_0 \in D$ and let $r \in C^0_P(R_{d-1})$ be Lipschitz continuous.
\begin{enumerate}[(i)]
\item Suppose that $(r_\eps)_{\eps>0} \subseteq C^0_P(R_{d-1})$ is a sequence of functions such that\linebreak $\|r_\eps - r\|_\infty\rightarrow 0$. Then $F_2(r_\eps,x_0) \rightarrow F_2(r,x_0)$ in measure\footnote{A sequence of functions $(f_\eps)_{\eps>0}$, $f_\eps:D\rightarrow\mathbb{R}$, is said to converge in measure to a function $f:D\rightarrow\mathbb{R}$ if for all $\delta > 0$, $|\{x \in D\;|\;|f_\eps(x) - f(x)| > \delta\}|\rightarrow 0$. Here $|B|$ denotes the Lebesgue measure of a set $B\subseteq\mathbb{R}^d$.}.
\item Suppose that $(x_0^\eps)_{\eps > 0} \subseteq D$ is a sequence of points such that $|x_0^\eps - x_0| \rightarrow 0$. Then $F_2(r,x_0^\eps) \rightarrow F_2(r,x_0)$ in measure.
\item Let $(r_\eps)_{\eps>0}$, $(x_0^\eps)_{\eps>0}$ be as above. Then $F_2(r_\eps,x_0^\eps) \rightarrow F_2(r,x_0)$ in measure.
\end{enumerate}
\end{proposition}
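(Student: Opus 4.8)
The plan is to reduce everything to a single elementary statement: the symmetric difference $A(r,x_0) \triangle A(\tilde r, \tilde x_0)$ has small Lebesgue measure when $r,\tilde r$ are uniformly close and $x_0,\tilde x_0$ are close, provided the limiting $r$ is Lipschitz. Indeed, $F_2(r,x_0)$ and $F_2(\tilde r,\tilde x_0)$ differ only on this symmetric difference, where they differ by the fixed amount $|u_+ - u_-|$; hence $|\{x : |F_2(r_\eps,x_0^\eps)(x) - F_2(r,x_0)(x)| > \delta\}| \leq |A(r_\eps,x_0^\eps)\triangle A(r,x_0)|$ for any $\delta < |u_+-u_-|$ (and the set is empty for $\delta \geq |u_+-u_-|$), so convergence in measure of $F_2$ is \emph{equivalent} to $|A(r_\eps,x_0^\eps)\triangle A(r,x_0)| \to 0$. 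Note also that (iii) contains (i) and (ii) as special cases (take the other sequence constant), so it suffices to prove (iii); I would state it that way and remark that (i),(ii) follow.

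The core geometric estimate is the following. Write $\rho(x) = |x - x_0|$ and $\rho_\eps(x) = |x - x_0^\eps|$, and $g(x) = r(h(x-x_0))$, $g_\eps(x) = r_\eps(h(x-x_0^\eps))$, so $A(r,x_0) = \{x\in D : \rho(x) \le g(x)\}$. A point $x$ lies in the symmetric difference only if $\rho(x) - g(x)$ and $\rho_\eps(x) - g_\eps(x)$ have opposite signs, which forces $|\rho(x) - g(x)| \le |(\rho(x)-g(x)) - (\rho_\eps(x)-g_\eps(x))| \le |\rho(x)-\rho_\eps(x)| + |g(x)-g_\eps(x)|$. The first term is at most $|x_0 - x_0^\eps|$ by the reverse triangle inequality. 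For the second, $|g(x) - g_\eps(x)| \le |r(h(x-x_0)) - r(h(x-x_0^\eps))| + |r(h(x-x_0^\eps)) - r_\eps(h(x-x_0^\eps))| \le L_r \cdot \omega_h(|x_0-x_0^\eps|) + \|r - r_\eps\|_\infty$, where $L_r$ is the Lipschitz constant of $r$ and $\omega_h$ is a modulus of continuity of $h$ on the relevant compact set (one must be slightly careful that $h$ is uniformly continuous away from a neighbourhood of the origin, and handle a small ball around $x_0$ separately, contributing measure $O(|x_0-x_0^\eps|^d)$). Thus there is $\gamma_\eps \to 0$ such that the symmetric difference is contained in the ``annular collar'' $\{x \in D : |\,|x-x_0| - r(h(x-x_0))\,| \le \gamma_\eps\}$.

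The final step is to show this collar has Lebesgue measure tending to $0$ as $\gamma_\eps \downarrow 0$. Since $r$ is Lipschitz and continuous and periodic, it is bounded, say $0 < r_{\min} \le r \le r_{\max}$ (the lower bound: if $r$ is not bounded below by a positive constant the star-shaped set degenerates, but continuity on the compact $R_{d-1}$ gives a genuine minimum; if that minimum is $0$ one can still argue, but in the intended application $r$ is bounded below). The collars are nested decreasingly in $\gamma_\eps$ and their intersection over all $\gamma > 0$ is exactly the boundary set $\partial_* := \{x\in D : |x-x_0| = r(h(x-x_0))\}$, which is a Lipschitz hypersurface (graph of $r$ in polar coordinates) and hence Lebesgue-null; by continuity of measure from above (the collar for $\gamma = 1$ has finite measure since $D$ is bounded), $|\text{collar}_{\gamma}| \to |\partial_*| = 0$. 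Therefore $|A(r_\eps,x_0^\eps) \triangle A(r,x_0)| \to 0$, and the conclusion follows.

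I expect the main obstacle to be the measure-zero claim for the boundary set $\partial_*$ and the continuity-from-above argument, together with the care needed near the origin where the polar coordinate map $h$ is not continuous: one should excise a small ball $B(x_0,\tau)$, bound its contribution crudely by $|B(x_0,\tau)| = c_d\tau^d$, and use uniform continuity of $h$ on $\mathbb{R}^d \setminus B(0,\tau)$ for the rest, then send $\tau \to 0$ after $\eps \to 0$ (or choose $\tau = \tau_\eps$ tending to $0$ slowly). Everything else is routine triangle-inequality bookkeeping; I would present the symmetric-difference reduction and the collar estimate in detail and treat the null-set/continuity-of-measure point carefully, relegating the near-origin excision to a short remark.
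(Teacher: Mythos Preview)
Your approach is correct and genuinely different from the paper's. The paper treats (i), (ii), (iii) separately: for (i) it sandwiches $A(r_\eps)$ and $A(r)$ between $A(r-\gamma(\eps))$ and $A(r+\gamma(\eps))$, so the symmetric difference sits in the annular collar $A(r+\gamma)\setminus A(r-\gamma)$, which shrinks to the measure-zero graph of $r$ --- no Lipschitz assumption is used. For (ii) the paper invokes an external geometric result (Schymura's bound on the symmetric difference of a set and a translate by the translation distance times the $(d-1)$-Hausdorff measure of the boundary); the Lipschitz assumption on $r$ is used precisely here to guarantee finite perimeter. Part (iii) then follows by a triangle inequality on symmetric differences, noting that the estimate in (i) is translation-invariant.

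Your unified collar argument avoids the external Schymura lemma entirely and is more self-contained, at the price of needing the Lipschitz hypothesis throughout (to control $|r(h(x-x_0)) - r(h(x-x_0^\eps))|$) and of having to manage the polar-coordinate singularity at the centre. Two small remarks: your excision near $x_0$ is indeed the right way to handle the fact that $h$ is only locally Lipschitz away from the origin (with constant $\sim 1/\tau$ on $\{|y|\ge\tau\}$), and the Lipschitz bound on $r$ should be read with respect to the \emph{periodic} metric on $R_{d-1}$, so that the branch-cut discontinuity of $h$ is harmless. Also, the null-measure of the boundary set $\partial_*$ needs only continuity of $r$ (graph of a continuous function), not Lipschitz, so that step is not an obstacle. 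The paper's separation of cases buys a cleaner identification of where Lipschitz is actually needed (only in (ii)), which it remarks on explicitly; your route buys independence from the cited geometric lemma.
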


\begin{proof}
In order to show that a sequence of functions $f_\eps:D\rightarrow\mathbb{R}$ converges to $f:D\rightarrow\mathbb{R}$ in measure, it suffices to show that there exists a sequence of sets $Z_\eps\subseteq D$ with $|Z_\eps|\rightarrow 0$ such that $|f_\eps - f| \leq C\mathds{1}_{Z_\eps}$. Then for each $\delta > 0$ we have
\[
|\{x \in D\,|\,|f_\eps(x)-f(x)| > \delta\}| \leq |\{x \in D\,|\,|f_\eps(x) - f(x)| \neq 0\}| \leq |Z_\eps| \rightarrow 0.
\]
\begin{enumerate}[(i)]
\item Fix the centre $x_0 \in D$. Denote $A(r) = A(r,x_0)$. Let $r \in C^0_P(R_{d-1})$ and let $(r_{\eps})_{\eps>0} \subseteq C^0_P(R_{d-1})$ be a sequence of functions such that $\|r_\eps - r\|_{\infty} \rightarrow 0$. Then there exists $\gamma(\eps)\rightarrow 0$ such that $\|r_\eps - r\|_\infty < \gamma(\eps)$. By definition we then have
\[
r(x) - \gamma(\eps) \leq r_\eps(x) \leq r(x) + \gamma(\eps)\;\;\;\text{for all }x \in D\text{ and }\eps > 0.
\]
It follows that we have the inclusions
\begin{align*}
A(r - \gamma(\eps)) \subseteq A(r_\eps) \subseteq A(r + \gamma(\eps)),\\
A(r - \gamma(\eps)) \subseteq A(r) \subseteq A(r + \gamma(\eps)).
\end{align*}
Let $\Delta$ denote the symmetric difference. We deduce that
\[
A(r_\eps)\Delta A(r) \subseteq A(r+\gamma(\eps))\setminus A(r-\gamma(\eps)).
\]
Now the right-hand side is given by
\begin{align*}
A(r+\gamma(\eps))&\setminus A(r-\gamma(\eps))\\
&=\left\{x\in D\;\big|\;r(h(x-x_0)) - \gamma(\eps) < |x-x_0| \leq r(h(x-x_0)) + \gamma(\eps)\right\}.
\end{align*}
As $\eps\rightarrow 0$, this set decreases to the boundary set
\[
\partial A(r) = \left\{x \in D\;\big|\;|x-x_0| = r(h(x-x_0))\right\}.
\]
Since the graph of a continuous function has Lebesgue measure zero, we deduce that $|\partial A(r)| = 0$. It follows that
\[
\lim_{\eps\rightarrow 0} |A(r_\eps)\Delta A(r)| = 0.
\]
To conclude, note that
\[
|F_2(r_\eps,x_0) - F_2(r,x_0)| \leq |u_+ - u_-||\mathds{1}_{A(r_\eps)} - \mathds{1}_{A(r_\eps)}| = C\mathds{1}_{A(r_\eps)\Delta A(r)}.
\]

\item Let $r \in C^0_P(R_{d-1})$ be Lipschitz continuous. Denote $A(x_0) = A(r,x_0)$. Let $(x_0^\eps)\subseteq D$ be a sequence of points such that $|x_0^\eps-x_0|\rightarrow 0$. Note that we may write
\begin{align*}
A(x_0^\eps) &= \{x \in D\;|\;|x-x_0^\eps| \leq r(h(x-x_0^\eps))\}\\
&= \{x \in \mathbb{R}^d\;|\;|x-x_0^\eps| \leq r(h(x-x_0^\eps))\}\cap D\\
&= ((x_0^\eps - x_0) + \{x \in \mathbb{R}^d\;|\;|x-x_0| \leq r(h(x-x_0))\})\cap D\\
&=: ((x_0^\eps - x_0) + A(x_0)^*)\cap D.
\end{align*}
By the distributivity of intersection over symmetric difference, we then have that
\begin{align*}
A(x_0^\eps)\Delta A(x_0) &= [((x_0^\eps - x_0) + A(x_0)^*)\cap D]\Delta [A(x_0)^*\cap D]\\
&= [((x_0^\eps - x_0) + A(x_0)^*)\Delta A(x_0)^*]\cap D\\
& \subseteq ((x_0^\eps - x_0) + A(x_0)^*)\Delta A(x_0)^*.
\end{align*}
Therefore, using Theorem 1 from \cite{geometry}, we see that
\begin{align*}
|A(x_0^\eps)\Delta A(x_0)| &\leq |((x_0^\eps - x_0) + A(x_0)^*)\Delta A(x_0)^*|\\
& \leq |x_0^\eps - x_0|\mathcal{H}^{d-1}(\partial A(x_0)^*)
\end{align*}
where $\mathcal{H}^{d-1}$ is the $(d-1)$-dimensional Hausdorff measure. Since we assume that $r$ is Lipschitz, the surface area $\mathcal{H}^{d-1}(\partial A(x_0)^*)$ of the boundary of $A(x_0)^*$ is finite, and so it follows that
\[
\lim_{\eps\rightarrow 0} |A(x_0^\eps)\Delta A(x_0)| = 0.
\]
As before, we conclude by noting that
\[
|F_2(r,x_0^\eps) - F_2(r,x_0)| \leq |u_+ - u_-||\mathds{1}_{A(x_0^\eps)} - \mathds{1}_{A(x_0)}| = C\mathds{1}_{A(x_0^\eps)\Delta A(x_0)}.
\]

\item We have that
\begin{align*}
|F_2(r_\eps,x_0^\eps) - F_2(r,x_0)| &\leq |F_2(r_\eps,x_0^\eps) - F_2(r,x_0^\eps)| + |F_2(r,x_0^\eps) - F_2(r,x_0)|\\
&\leq C(\mathds{1}_{A(r_\eps,x_0^\eps)\Delta A(r,x_0^\eps)} + \mathds{1}_{A(r,x_0^\eps)\Delta A(r,x_0)})\\
&\leq C\mathds{1}_{[A(r_\eps,x_0^\eps)\Delta A(r,x_0^\eps)]\cup [A(r,x_0^\eps)\Delta A(r,x_0)]}.
\end{align*}
Now note that
\[
|A(r_\eps,y_0)\Delta A(r,y_0)| \leq |A(r_\eps,y_0)^*\Delta A(r,y_0)^*|.
\]
The right hand-side is independent of $y_0$ by translation invariance of the Lebesgue measure. By the same argument as part (i) we conclude that it tends to zero. We then have that
\begin{align*}
|[A(r_\eps,x_0^\eps)\Delta A(r,x_0^\eps)]&\cup [A(r,x_0^\eps)\Delta A(r,x_0)]|\\
&\leq |A(r_\eps,x_0^\eps)\Delta A(r,x_0^\eps)| + |A(r,x_0^\eps)\Delta A(r,x_0)|\\
&\leq \sup_{y_0 \in D} |A(r_\eps,y_0)\Delta A(r,y_0)| + |A(r,x_0^\eps)\Delta A(r,x_0)|
\end{align*}
which tends to zero by the discussion above and part (ii).
\end{enumerate}
\end{proof}

\begin{remark}
Above we assumed that $r:R_{d-1}\rightarrow\mathbb{R}$ was Lipschitz continuous. This assumption is only used in the proof of part (ii) of the proposition. If the centre of the star-shaped region is known, this assumption may then be dropped to allow for rougher boundaries.
\end{remark}

We need to choose a prior measure $\mu_0$ on $(r,x_0)$. \mmd{We equip $C^0_P(R_{d-1})\times D$ with the norm $\|(r,x_0)\| = \max\{\|r\|_\infty,|x_0|\}$ and corresponding Borel $\sigma$-algebra.} We assume that $r$ and $x_0$ are independent under the prior so that we may factor $\mu_0 = \sigma_0\otimes\tau_0$ where $\sigma_0$ is a measure on $C^0_P(R_{d-1})$ and $\tau_0$ is a measure on $D$. We will assume that $\sigma_0$ is such that $\sigma_0(B) > 0$ for all balls $B\subseteq C_P^0(R_{d-1})$. 

\begin{example}
Consider the case $D = B(0,1) \subseteq \mathbb{R}^2$. Suppose that $r$ is drawn from a log-Gaussian measure $\sigma_0$ on $C^0_P((-\pi,\pi])$, and $x_0$ is drawn from $\tau_0 = U([-0.5,0.5]^2)$. Note that $[-0.5,0.5]^2 \subseteq B(0,1)$. Typical samples from $F_2^\#(\mu_0)$ are shown below. The covariance of $\sigma_0$ is chosen such that the samples $r$ almost surely have regularity \mmd{$r \in C^{\lfloor s \rfloor, s-\lfloor s \rfloor}((-\pi,\pi])$} for all $s < t$, where from left to right $t = 2.5, 2, 1.5, 1$ respectively.
\begin{center}
\includegraphics[width=1\textwidth, trim= 4cm 0cm 4cm 0cm]{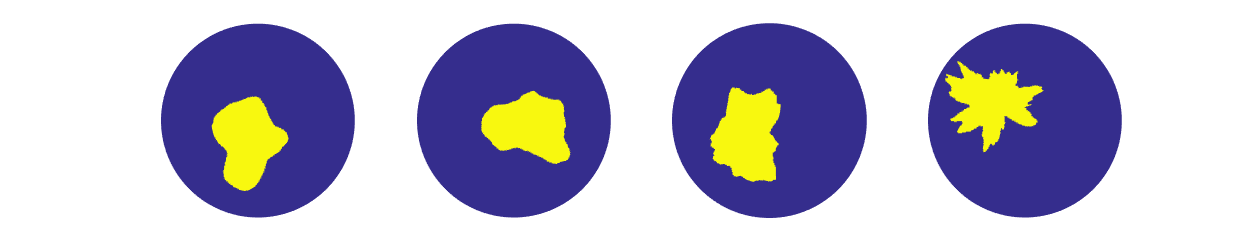}
\end{center}

\end{example}

\subsubsection{Level set prior}
We finally consider the case where the inclusions can be described by a single level set function, as in \cite{levelset}. Let $n \in \mathbb{N}$ and fix constants $-\infty = c_0 < c_1 < \ldots < c_n = \infty$. Given $u:D\rightarrow\mathbb{R}$, define $D_i\subseteq D$ by
\[
D_i = \{x \in D\,|\, c_{i-1} \leq u(x) < c_i\},\;\;\;i=1,\ldots,n
\]
so that $\overline{D} = \bigcup_{i=1}^n \overline{D}_i$ and $D_i\cap D_j = \varnothing$ for $i\neq j$, $i,j \geq 1$. Define also the level sets
\[
D_i^0 = \overline{D}_i\cap \overline{D}_{i+1} = \{x \in D\,|\,u(x) = c_i\},\;\;\; i=1,\ldots,n-1.
\]
Now given strictly positive functions $f_1,\ldots,f_n \in C^0(\overline{D})$, we define the map $F_3:C^0(\overline{D})\rightarrow\mathcal{A}(D)$ by
\[
F_3(u) = \sum_{i=1}^n f_i\mathds{1}_{D_i}.
\]
Since each $f$ is continuous and strictly positive on a compact set $\overline{D}$, they are uniformly bounded above and below by positive constants, and so $F_3$ does indeed map into $\mathcal{A}(D)$. 

In this paper we are primarily concerned with the case of binary fields, $n = 2$ and $f_i$ constant above, however the theory in proved in the general case. We have the following result regarding continuity of this map, by the same arguments as in \cite{levelset}.
\begin{proposition}
\label{prop:lvlcts}
Define the map $F_3:C^0(\overline{D})\rightarrow\mathcal{A}(D)$ as above. Let $u \in C^0(\overline{D})$ be such that $|D_i^0| = 0$ for $i=1,\ldots,n-1$. Suppose that $(u_\eps)_{\eps > 0} \subseteq C^0(\overline{D})$ is an approximating sequence of functions so that $\|u_\eps - u\|_\infty \rightarrow 0$. Then $F_3(u_\eps) \rightarrow F_3(u)$ in measure.
\end{proposition}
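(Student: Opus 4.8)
The plan is to follow exactly the template set up at the beginning of the proof of Proposition \ref{prop:starcts}: to prove convergence in measure it suffices to produce sets $Z_\eps \subseteq D$ with $|Z_\eps| \to 0$ and a constant $C > 0$ such that $|F_3(u_\eps) - F_3(u)| \le C\mathds{1}_{Z_\eps}$ pointwise. Since each $f_i$ is continuous on the compact set $\overline{D}$, the functions $f_i$ are uniformly bounded, so $|F_3(u_\eps) - F_3(u)| \le 2\max_i\|f_i\|_\infty\,\mathds{1}_{\{F_3(u_\eps)\neq F_3(u)\}}$, and everything reduces to controlling the Lebesgue measure of the disagreement set $\{x \in D : F_3(u_\eps)(x) \neq F_3(u)(x)\}$.

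First I would localise this disagreement set near the (finite) thresholds. Write $\gamma(\eps) = \|u_\eps - u\|_\infty \to 0$. If $F_3(u_\eps)(x)\neq F_3(u)(x)$ then $x$ must lie in different cells $D_j$ and $D_k$ for $u$ and for $u_\eps$ respectively, with $j\neq k$; chasing the inequalities defining the half-open intervals and using $|u_\eps(x)-u(x)|\le\gamma(\eps)$, one of $u(x)$'s nearest thresholds among $c_1,\dots,c_{n-1}$ is within $\gamma(\eps)$ of $u(x)$. Hence
\[
\{x \in D : F_3(u_\eps)(x)\neq F_3(u)(x)\} \subseteq Z_\eps := \bigcup_{i=1}^{n-1}\{x \in D : |u(x) - c_i| \le \gamma(\eps)\}.
\]
(The half-open nature of the cells $[c_{i-1},c_i)$ only plays a role when $u(x)$ hits a threshold exactly, and such points already lie in $Z_\eps$.)

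It then remains to show $|Z_\eps| \to 0$, and here I would avoid any monotonicity assumption on $\gamma(\eps)$: for fixed $\delta > 0$ put $W_\delta = \bigcup_{i=1}^{n-1}\{x \in D : |u(x)-c_i| \le \delta\}$, so that $Z_\eps \subseteq W_\delta$ once $\gamma(\eps) \le \delta$. As $\delta \downarrow 0$ the sets $W_\delta$ decrease to $\bigcup_{i=1}^{n-1}\{x : u(x) = c_i\} = \bigcup_{i=1}^{n-1} D_i^0$, which has measure zero by hypothesis; since $|D| < \infty$, continuity of measure from above yields $|W_\delta| \to 0$. Thus $\limsup_{\eps\to 0}|Z_\eps| \le |W_\delta|$ for every $\delta$, so $|Z_\eps| \to 0$, and the claim follows from the reduction above.

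I do not expect a genuine obstacle here; the only point requiring care is the bookkeeping around the half-open intervals $[c_{i-1},c_i)$ — a point with $u(x) = c_i$ sits in $D_{i+1}$ but an arbitrarily small downward perturbation moves it into $D_i$ — and handling these boundary points is exactly where, and the only place where, the hypothesis $|D_i^0| = 0$ is used.
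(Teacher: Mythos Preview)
Your argument is correct and follows essentially the same approach as the paper: bound $|F_3(u_\eps)-F_3(u)|\le C\mathds{1}_{Z_\eps}$ with $Z_\eps$ a $\gamma(\eps)$-neighbourhood of the threshold level sets, then use $|D_i^0|=0$ and continuity of measure to get $|Z_\eps|\to 0$. Your version is slightly more streamlined (you go straight to $Z_\eps$ rather than first decomposing into $D_{i,\eps}\cap D_j$ and splitting on $|i-j|$), and your $W_\delta$ step handles the limit $|Z_\eps|\to 0$ more carefully than the paper's informal ``$\widetilde{D}_{i,\eps}\to D_i^0$'', but the substance is the same.
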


\begin{proof}
Denote by $D_{i,\eps}$ and $D_{i,\eps}^0$ the sets as defined above associated with the approximating functions $u_\eps$. We can write
\[
F_3(u_\eps) - F_3(u) = \sum_{i=1}^n\sum_{j=1}^n (f_i - f_j)\mathds{1}_{D_{i,\eps}\cap D_j} = \sum_{\substack{i,j=1\\i\neq j}}^n (f_i - f_j)\mathds{1}_{D_{i,\eps}\cap D_j}.
\]
Since $\|u_\eps - u\|_\infty\rightarrow 0$, there exists $\gamma(\eps)\rightarrow 0$ with $\|u_\eps - u\|_\infty < \gamma(\eps)$. Then we have for all $x \in D$ and $\eps > 0$
\[
u(x) - \gamma(\eps) < u_\eps(x) < u(x) + \gamma(\eps).
\]
Hence for $|j-i| > 1$ and $\eps$ sufficiently small, $D_{i,\eps}\cap D_j = \varnothing$. If $|j-i| = 1$, then
\begin{align*}
D_{i,\eps}\cap D_{i+1} &\subseteq \widetilde{D}_{i,\eps} := \{x \in D\,|\,c_i \leq u(x) < c_i + \gamma(\eps)\}\rightarrow D_i^0,\\
D_{i,\eps}\cap D_{i-1} &\subseteq \hat{D}_{i-1,\eps} := \{x \in D\,|\,c_i-\gamma(\eps) \leq u(x) < c_i\}\rightarrow\varnothing.
\end{align*}
By the uniform boundedness of the $(f_i)$, for sufficiently small $\eps$ we can then write
\begin{align}
|F_3(u_\eps) - F_3(u)| &\leq \sum_{i=1}^{n-1} |f_i - f_{i+1}|\mathds{1}_{\widetilde{D}_{i,\eps}} + \sum_{i=2}^n |f_i - f_{i-1}|\mathds{1}_{\hat{D}_{i-1,\eps}}\notag\\
&\leq C\mathds{1}_{Z_\eps}\label{eq:flevbound}
\end{align}
where $Z_{\eps}\subseteq D$ is given by
\begin{align*}
Z_\eps &= \left(\bigcup_{i=1}^{n-1} \widetilde{D}_{i,\eps}\right)\cup\left(\bigcup_{i=2}^n \hat{D}_{i-1,\eps}\right) \rightarrow \bigcup_{i=1}^{n-1} D_i^0.
\end{align*}
By the assumption that $|D_i^0| = 0$ for all $i$, it follows that $|Z_\eps|\rightarrow 0$, and so the result follows from the comment at the start of the proof of Proposition \ref{prop:starcts}.
\end{proof}

\begin{figure}
\begin{center}
\includegraphics[width=0.92\textwidth, trim= 0cm 0cm 0cm 0cm]{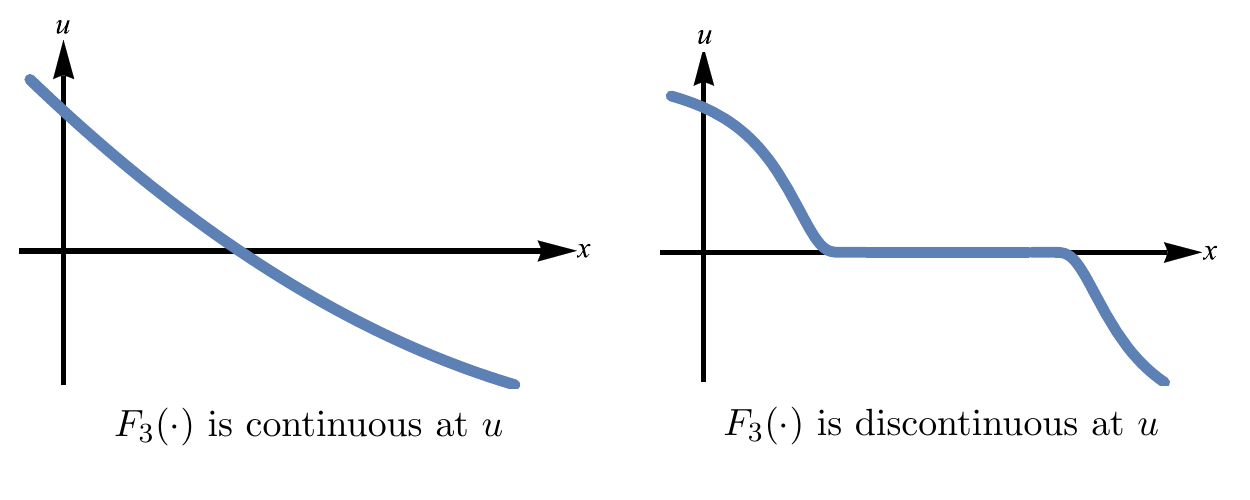}
\caption{The discontinuity of $F_3$ into $L^p(D)$.}.
\label{fig:levelsets}
\end{center}
\end{figure}

Note that bound (\ref{eq:flevbound}) actually above implies the slightly stronger result that, when the $c_i$-level sets of $u \in X$ have zero measure, $F_3$ is continuous into $L^p(D)$, $1\leq p < \infty$, at $u$. The assumption that the level sets have zero measure is an important one, as illustrated by Figure \ref{fig:levelsets}: an arbitrarily small perturbation of $u$ can lead to an order 1 change in $F_3(u)$.

In the Bayesian approach we are taking to this problem, we may choose a prior measure on $u$ such that, almost surely, the Lebesgue measure of the level sets is zero. This is shown to hold for \mmd{non-degenerate} Gaussian measures in \cite{levelset}. As a result, $F_3$ will be almost surely continuous under the prior, and this is enough to give the measurability required in Bayes' theorem, as shown in \cite{levelset}.

As in the log-Gaussian case, we take our prior measure $\mu_0$ on $u$ to be a Gaussian measure $N(m_0,\mathcal{C}_0)$ on $C^0(\overline{D})$.

\begin{example}
Consider the case $D = B(0,1) \subseteq \mathbb{R}^2$, $n = 2$, $c_1 = 0$, $f_1 \equiv 1$ and $f_2 \equiv 2$. Suppose that $u$ is drawn from a centred Gaussian measure $\mu_0 = N(0,\mathcal{C})$ on $C^0(\overline{D})$. The covariance $\mathcal{C}$ is chosen such that the samples $u$ almost surely have regularity \mmd{$u \in C^{\lfloor s \rfloor, s-\lfloor s \rfloor}(D)$} for all $s < t$, where from left to right $t = 4, 3, 2, 1$ respectively. As in the log-Gaussian case, here the samples are generated on $[-1,1]^2\supseteq D$ and then restricted to $D$, for computational simplicity.

\begin{center}
\includegraphics[width=1\textwidth, trim= 4cm 0cm 4cm 0cm]{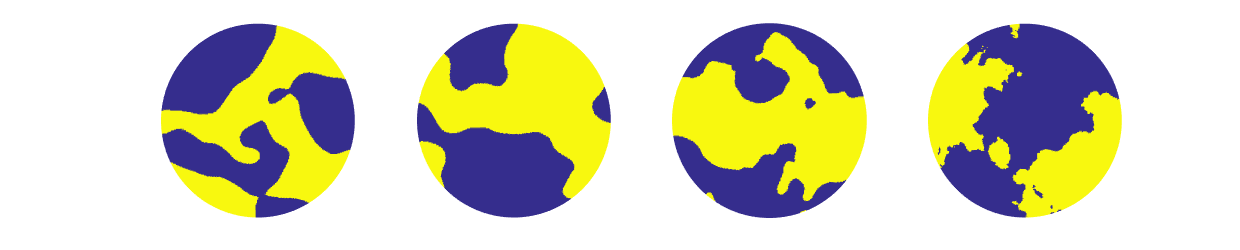}
\end{center}

\end{example}

\mmd{\begin{remark}
In the cases of the star-shaped and level set priors, we have assumed the values that the conductivity takes are known a priori. This may be appropriate in certain situations, for example in medical contexts where conductivities of different types of tissue are known \cite{borcea_review}. However it may be preferable to have the flexibility of treating the conductivity values as part of the inference. The theory does not become significantly more involved, as it can be shown that the conclusions of propositions \ref{prop:starcts} and \ref{prop:lvlcts} still hold when we allow for perturbation of the conductivity values as well. We work with the case of fixed conductivity values for clarity of presentation, but we note that it is possible to place a prior upon these.
\end{remark}
}

\subsection{The Likelihood and Posterior Distribution}
\label{ssec:post}
The inverse problem was introduced at the beginning of the section. Now that we have introduced prior distributions, we may provide the Bayesian formulation of the problem.

Let $X$ be a separable Banach space and $F:X\rightarrow\mathcal{A}(D)$ a map from the space $X$ where the unknown parameters live to the conductivity space. Choose a set of current stimulation patterns $I^{(j)} \in \mathbb{R}^L$, $j=1,\ldots,J$ and let $\mathcal{M}_j:\mathcal{A}(D)\rightarrow\mathbb{H}$ denote the solution map when using stimulation pattern $I^{(j)}$. Recall the projection map $\Pi:\mathbb{H}\rightarrow\mathbb{R}^L$ was defined by $\Pi(v,V) = V$.

The data $y_j$ from the $j$th stimulation pattern is assumed to arise from the map $\mathcal{G}_j:X\rightarrow\mathbb{R}^L$, $\mathcal{G}_j = \Pi\circ\mathcal{M}_j\circ F$, via
\[
y_j = \mathcal{G}_j(u) + \eta_j,\;\;\;\eta_j \sim N(0,\Gamma_0)\text{ iid.}
\]
We concatenate these observations to get data $y \in \mathbb{R}^{JL}$ given by
\begin{align*}
y = \mathcal{G}(u) + \eta,\;\;\;&\eta \sim \mathbb{Q}_0 := N(0,\Gamma)
\end{align*}
where $\Gamma = \mathrm{diag}(\Gamma_0,\ldots,\Gamma_0)$ and $\mathcal{G}:X\rightarrow\mathbb{R}^{JL}$. This coincides with the setup at the start of the section, with $\sigma = F(u)$.

Assume that $u \sim \mu_0$, where $\mu_0$ is independent of $\mathbb{Q}_0$. From the above, we see that $y|u \sim \mathbb{Q}_u:= N(\mathcal{G}(u),\Gamma)$. We use this to find the distribution of $u|y$. First note that
\begin{align*}
\frac{\dee\mathbb{Q}_u}{\dee \mathbb{Q}_0}(y) = \exp\left(-\Phi(u;y) + \frac{1}{2}|y|_\Gamma^2\right)
\end{align*}
where the potential (or negative log-likelihood) $\Phi:X\times Y\rightarrow\mathbb{R}$ is given by
\begin{align}
\label{eq:phi}
\Phi(u;y) = \frac{1}{2}|\mathcal{G}(u) - y|_\Gamma^2 := \mmd{\frac{1}{2}|\Gamma^{-\frac{1}{2}}(\mathcal{G}(u)-y)|^2.}
\end{align}
Then under suitable regularity conditions, Bayes' theorem tells us that the distribution $\mu^y$ of $u|y$ is as given below:
\begin{theorem}[Existence and Well-Posedness]
\label{thm:existence}
Let $(X,\mathcal{F},\mu_0)$ denote any of the probability spaces associated with any of the three priors introduced in the previous subsection, and let $\Phi:X\times Y\rightarrow\mathbb{R}$ be the potential (\ref{eq:phi}) associated with the corresponding forward map. Then the posterior distribution $\mu^y$ of the state $u$ given data $y$ is well-defined. Furthermore, $\mu^y\ll \mu_0$ with Radon-Nikodym derivative
\begin{align}
\label{eq:posterior}
\frac{\dee \mu^y}{\dee \mu_0}(u) = \frac{1}{Z_\mu}\exp(-\Phi(u;y))
\end{align}
where for $y$ $\mathbb{Q}_0$-a.s.,
\[
Z_\mu := \int_X \exp(-\Phi(u;y))\,\mu_0(\dee u) > 0.
\]
Additionally, the posterior measure $\mu^y$ is locally Lipschitz with respect to $y$, in the Hellinger distance: for all $y,y' \in Y$ with $\max\{|y|_\Gamma,|y'|_\Gamma\} < \rho$, there exists $C = C(\rho) > 0$ such that
\[
d_{\mathrm{Hell}}(\mu^y,\mu^{y'}) \leq C|y-y'|_\Gamma.
\]
\end{theorem}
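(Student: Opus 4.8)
The plan is to verify the hypotheses of the general Bayesian well-posedness theory for inverse problems with finite-dimensional data and additive Gaussian noise, as developed in \cite{lecturenotes}: measurability of the potential, positivity and finiteness of the normalising constant, and local boundedness, integrability and Lipschitz bounds on $\Phi$ in the data. The problem is structurally simpler than the general nonlinear case because $Y=\mathbb{R}^{JL}$ is finite dimensional: $\mathcal{G}(u)$ is a genuine finite vector for every $u\in X$, and $\Phi(u;y)=\tfrac12|\Gamma^{-1/2}(\mathcal{G}(u)-y)|^2\ge 0$ is automatically nonnegative, so no integrability argument is needed merely to make sense of the posterior. For measurability, note that in each case $\mathcal{G}=\Pi\circ\mathcal{M}_j\circ F_i$. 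Corollary \ref{cor:fwdcont} shows $\Pi\circ\mathcal{M}_j$ is continuous on $\mathcal{A}(D)$ with respect both to uniform convergence and to uniformly bounded convergence in measure. For $F_1$ this composes with the uniform-to-uniform continuity of $F_1$ to give a continuous, hence Borel measurable, map $\mathcal{G}_1:C^0(\overline D)\to\mathbb{R}^{JL}$. For $F_2$ and $F_3$, Propositions \ref{prop:starcts} and \ref{prop:lvlcts} (together with the uniform bounds $u_-\le F_2\le u_+$ and $\min_i\inf f_i\le F_3\le\max_i\sup f_i$) show $\mathcal{G}$ is continuous at every point where $F_i$ is continuous, which under the prior is almost every point: $\sigma_0$-a.e.\ $r$ is Lipschitz, and $\mu_0$-a.e.\ $u$ has level sets of zero Lebesgue measure \cite{levelset}. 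Hence $\mathcal{G}$ is $\mu_0$-a.s.\ continuous, so Borel measurable after completion; since $\Phi$ is continuous in $y$, it is jointly measurable on $X\times Y$.

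Next I would establish the two estimates that drive everything. First, for each $M,\rho>0$, $\sup\{\Phi(u;y):\|u\|_X\le M,\ |y|_\Gamma\le\rho\}<\infty$: on $\{\|u\|_X\le M\}$ the conductivity $F_i(u)$ has lower and upper bounds independent of $u$ (for $F_1$, $e^{-M}\le\exp(u)\le e^{M}$; for $F_2,F_3$ the fixed values), whence coercivity and boundedness of $B(\cdot,\cdot;F_i(u))$ and boundedness of $r$ give a uniform bound on $\|\mathcal{M}_j(F_i(u))\|_{\mathbb{H}}$, hence on $|\mathcal{G}(u)|$. Second, $\int_X|\mathcal{G}(u)|_\Gamma^2\,\mu_0(\dee u)<\infty$: for $F_2$ and $F_3$ this integrand is uniformly bounded, while for $F_1$ the same coercivity argument gives $|\mathcal{G}(F_1(u))|_\Gamma\le Ce^{\|u\|_\infty}$, and Fernique's theorem for the Gaussian $\mu_0$ on $C^0(\overline D)$ supplies the finite moment $\int_X e^{2\|u\|_\infty}\mu_0(\dee u)<\infty$. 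Combining the second bound with the identity $a^2-b^2=\langle a-b,a+b\rangle$ and Cauchy--Schwarz yields the local Lipschitz bound $|\Phi(u;y)-\Phi(u;y')|\le (2|\mathcal{G}(u)|_\Gamma+2\rho)|y-y'|_\Gamma$ for $|y|_\Gamma,|y'|_\Gamma\le\rho$.

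To conclude, since $0\le\Phi<\infty$ we have $Z_\mu=\int_X e^{-\Phi(u;y)}\mu_0(\dee u)\in(0,1]$ for every $y$, so Bayes' theorem gives (\ref{eq:posterior}). Taking $M$ large enough that $\mu_0(\|u\|_X\le M)>0$, the first estimate gives $Z_\mu\ge c(\rho)>0$ uniformly for $|y|_\Gamma\le\rho$; the Lipschitz bound and the second estimate give $|Z_\mu-Z_{\mu'}|\le\int_X|\Phi(u;y)-\Phi(u;y')|\mu_0(\dee u)\le C(\rho)|y-y'|_\Gamma$, hence $|Z_\mu^{-1/2}-Z_{\mu'}^{-1/2}|\le C(\rho)|y-y'|_\Gamma$. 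Finally, with the standard decomposition
\[
2\,d_{\mathrm{Hell}}(\mu^y,\mu^{y'})^2 \le \frac{2}{Z_\mu}\int_X\Bigl(e^{-\frac12\Phi(u;y)}-e^{-\frac12\Phi(u;y')}\Bigr)^2\mu_0(\dee u) + 2\bigl|Z_\mu^{-1/2}-Z_{\mu'}^{-1/2}\bigr|^2 Z_{\mu'},
\]
one bounds the first term via $|e^{-a/2}-e^{-b/2}|\le\tfrac12|a-b|$ together with the Lipschitz and second estimates and the lower bound on $Z_\mu$, and the second term via the Lipschitz bound on $Z_\mu^{-1/2}$ and $Z_{\mu'}\le1$; taking square roots yields the claimed estimate.

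The genuinely delicate point is the log-Gaussian prior, for which $\exp(u)$ is not uniformly bounded over $u\in X$, so $|\mathcal{G}(u)|$ cannot be controlled by a constant; one must instead track how the coercivity constant of $B(\cdot,\cdot;\exp(u))$ degrades with $\|u\|_\infty$ and then absorb the resulting exponential growth into a finite $\mu_0$-moment via Fernique's theorem. For the star-shaped and level set priors every relevant quantity is uniformly bounded by construction, and the only subtlety is the $\mu_0$-a.e.\ continuity (hence measurability) of $\mathcal{G}$, which reduces to the $\sigma_0$-a.s.\ Lipschitz regularity of $r$ (needed because the centre $x_0$ is also inferred) and the $\mu_0$-a.s.\ vanishing of the level set measures.
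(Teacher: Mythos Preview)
Your proposal is correct and follows essentially the same route as the paper: you establish the same regularity properties of $\Phi$ (collected in the paper as Proposition~\ref{prop:phireg}) --- $\mu_0$-a.s.\ continuity giving measurability, local boundedness in $(\|u\|_X,|y|_\Gamma)$, and a data-Lipschitz bound with $L^2_{\mu_0}$ constant (with the exponential growth for $F_1$ absorbed via Fernique) --- and then feed these into Bayes' theorem and the standard Hellinger decomposition. The only cosmetic differences are that you write out the Hellinger estimate explicitly where the paper defers to \cite{levelset}, and you note (correctly) that $Z_\mu\in(0,1]$ is immediate from $\Phi\ge 0$, whereas the paper restricts to a ball to obtain positivity; your explicit flagging of the $\sigma_0$-a.s.\ Lipschitz requirement on $r$ for the star-shaped prior is in fact slightly more careful than the paper's own phrasing.
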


In the proof of the above theorem we will make use of the following version of Bayes' theorem from \cite{lecturenotes}.

\begin{proposition}[Bayes' theorem]
\label{thm:bayes}
Define the measure $\nu_0(\dee u, \dee y) = \mu_0(\dee u)\mathbb{Q}_0(\dee y)$ on $X\times Y$. Assume that $\Phi:X\times Y\rightarrow\mathbb{R}$ is $\nu_0$-measurable and that, for $y$ $\mathbb{Q}$-a.s.
\[
Z_\mu = \int_X \exp(-\Phi(u;y))\,\mu_0(\dee u) > 0.
\]
Then the conditional distribution of $u|y$ exists and is denoted by $\mu^y$. Furthermore $\mu^y\ll\mu_0$ and, for $y$ $\mathbb{Q}_0$-a.s.,
\[
\frac{\dee \mu^y}{\dee \mu_0} = \frac{1}{Z_\mu}\exp\left(-\Phi(u;y)\right).
\]
\end{proposition}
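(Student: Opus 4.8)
The plan is to verify the hypotheses of Proposition \ref{thm:bayes} for each of the three priors and then establish the Lipschitz bound by a direct estimate on the Hellinger distance. The verification of the Bayes hypotheses breaks into two parts: (a) $\nu_0$-measurability of $\Phi(\cdot;\cdot)$, and (b) positivity and finiteness of the normalising constant $Z_\mu$ for $\mathbb{Q}_0$-a.e.\ $y$. For measurability, note that $\Phi(u;y) = \frac{1}{2}|\Gamma^{-1/2}(\mathcal{G}(u)-y)|^2$ is continuous in $y$, so it suffices to show $\mathcal{G} = \Pi\circ\mathcal{M}\circ F$ is $\mu_0$-measurable as a map $X\to\mathbb{R}^{JL}$; then joint measurability follows from a standard Carath\'eodory-type argument. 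Here $\Pi$ is bounded linear and $\mathcal{M}$ (composed with $\Pi$) is continuous on $\mathcal{A}(D)$ with respect to convergence in measure and uniform convergence by Proposition \ref{prop:fwdcont} and Corollary \ref{cor:fwdcont}. For the log-Gaussian prior, $F_1$ is continuous from $C^0(\overline D)$ to $\mathcal{A}(D)$ in the uniform topology, so $\mathcal{G}$ is continuous, hence Borel measurable, and $\mu_0$-measurability is immediate. For the star-shaped prior, $F_2$ maps into $\mathcal{A}(D)$ with the convergence-in-measure continuity of Proposition \ref{prop:starcts} holding at every $(r,x_0)$ with $r$ Lipschitz; since Gaussian (or log-Gaussian) samples of $r$ are a.s.\ H\"older, hence a.s.\ in the set where the continuity holds, $F_2$ is $\mu_0$-a.s.\ continuous into $(\mathcal{A}(D), \text{convergence in measure})$, and composing with the (sequentially) continuous $\Pi\circ\mathcal{M}$ gives $\mu_0$-a.s.\ continuity of $\mathcal{G}$, which yields $\mu_0$-measurability. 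For the level set prior, Proposition \ref{prop:lvlcts} gives convergence-in-measure continuity of $F_3$ at every $u$ whose $c_i$-level sets have zero Lebesgue measure; by the cited result of \cite{levelset} non-degenerate Gaussian $\mu_0$ charges this set fully, so again $\mathcal{G}$ is $\mu_0$-a.s.\ continuous and hence $\mu_0$-measurable.

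\textbf{Bounds on $\Phi$ and positivity of $Z_\mu$.} Having established measurability, I would record the elementary bound, valid for all three priors: since $\mathcal{G}(u)$ takes values in $\mathbb{R}^{JL}$,
\[
0 \le \Phi(u;y) = \tfrac{1}{2}|\Gamma^{-1/2}(\mathcal{G}(u)-y)|^2 < \infty
\]
pointwise, so $\exp(-\Phi(u;y)) \le 1$ and $Z_\mu = \int_X \exp(-\Phi(u;y))\,\mu_0(\dee u) \le 1 < \infty$. Positivity $Z_\mu > 0$ is the crucial point and is where the priors must be used individually. It suffices to exhibit, for each fixed $y$, a set of positive $\mu_0$-measure on which $\Phi(\cdot;y)$ is bounded above. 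One route: $\mathcal{G}$ is bounded on $\mathcal{A}(D)$-valued arguments once the conductivity bounds $\sigma^\pm$ are fixed, because the $\|\cdot\|_*$-norm of $\mathcal{M}(\sigma)$ is controlled by coercivity/boundedness constants of $B(\cdot,\cdot;\sigma)$ depending only on $\sigma_-,\sigma_+,z_\pm$ and $|I^{(j)}|$, and then $|\Pi\mathcal{M}(\sigma)|_{\ell^2}\le C\|\mathcal{M}(\sigma)\|_*$ by the argument in Corollary \ref{cor:fwdcont}. Hence on the $\mu_0$-full set where $F_i(u)\in\mathcal{A}(D)$ with fixed bounds (for $F_1$, restrict to $\{u: \|u\|_\infty\le M\}$, which has positive Gaussian measure; for $F_2,F_3$ the bounds are automatic), $\Phi(u;y) \le \frac{1}{2}(C + |y|_\Gamma)^2 < \infty$, giving $Z_\mu \ge e^{-\frac{1}{2}(C+|y|_\Gamma)^2}\mu_0(\{\|u\|_\infty\le M\}) > 0$ for \emph{every} $y$, in particular $\mathbb{Q}_0$-a.s. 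Proposition \ref{thm:bayes} then applies and yields the existence of $\mu^y$, the absolute continuity $\mu^y\ll\mu_0$, and the formula \eqref{eq:posterior}.

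\textbf{Hellinger Lipschitz bound.} For the final well-posedness assertion, fix $\rho>0$ and $y,y'$ with $\max\{|y|_\Gamma,|y'|_\Gamma\}<\rho$; write $Z = Z_\mu(y)$, $Z' = Z_\mu(y')$, both in $[e^{-\frac{1}{2}(C+\rho)^2}\mu_0(\{\|u\|_\infty\le M\}),\,1]$, so uniformly bounded below by a constant $c_\rho>0$. By definition,
\[
d_{\mathrm{Hell}}(\mu^y,\mu^{y'})^2 = \frac{1}{2}\int_X\left(\frac{1}{\sqrt{Z}}e^{-\frac{1}{2}\Phi(u;y)} - \frac{1}{\sqrt{Z'}}e^{-\frac{1}{2}\Phi(u;y')}\right)^2\mu_0(\dee u).
\]
Split via the triangle inequality into a term controlled by $\int |e^{-\frac12\Phi(u;y)} - e^{-\frac12\Phi(u;y')}|^2\,\mu_0(\dee u)$ and a term controlled by $|Z^{-1/2} - Z'^{-1/2}|^2 \le c_\rho^{-3}|Z - Z'|^2/4$. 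For the integrand, the elementary inequality $|e^{-a}-e^{-b}|\le|a-b|$ for $a,b\ge0$ together with
\[
|\Phi(u;y) - \Phi(u;y')| = \tfrac12\big||\Gamma^{-1/2}(\mathcal{G}(u)-y)|^2 - |\Gamma^{-1/2}(\mathcal{G}(u)-y')|^2\big| \le C_\rho|y - y'|_\Gamma
\]
(using $|\mathcal{G}(u)|\le C$ uniformly and $|y|_\Gamma,|y'|_\Gamma<\rho$) gives the first term $\le C_\rho^2|y-y'|_\Gamma^2$. For $|Z-Z'|$, the same Lipschitz bound on $\Phi$ and $|e^{-a}-e^{-b}|\le|a-b|$ give $|Z - Z'| \le \int_X|\Phi(u;y)-\Phi(u;y')|\,\mu_0(\dee u) \le C_\rho|y-y'|_\Gamma$. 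Combining, $d_{\mathrm{Hell}}(\mu^y,\mu^{y'})\le C(\rho)|y-y'|_\Gamma$, as claimed.

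\textbf{Main obstacle.} The genuinely delicate step is the $\mu_0$-measurability of $\mathcal{G}$ for the star-shaped and level set priors: the maps $F_2,F_3$ are \emph{not} continuous everywhere — $F_2$ fails when the boundary graph has positive measure or $r$ is not Lipschitz (for the centre-perturbation part), and $F_3$ fails at $u$ whose level sets charge positive Lebesgue measure — so one must invoke the prior-specific facts (H\"older regularity of Gaussian samples; zero-measure of level sets under non-degenerate Gaussians, from \cite{levelset}) to confine attention to a $\mu_0$-full set of good points, and then argue that a map which is continuous (into the topology of convergence in measure) on a Borel set of full measure, post-composed with a map that is sequentially continuous from that topology to $\mathbb{R}^{JL}$, is $\mu_0$-measurable. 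Everything else — the uniform bound on $\mathcal{G}$, the lower bound on $Z_\mu$, and the Hellinger estimate — is routine once this measurability is in hand.
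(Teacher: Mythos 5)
There is a fundamental mismatch here: you have not proved the statement in question. Proposition \ref{thm:bayes} is the \emph{abstract} Bayes' theorem: its hypotheses are that $\Phi$ is $\nu_0$-measurable and that $Z_\mu>0$ for $\mathbb{Q}_0$-a.e.\ $y$, and its conclusion is that the regular conditional distribution $\mu^y$ of $u|y$ \emph{exists}, is absolutely continuous with respect to $\mu_0$, and has density $Z_\mu^{-1}\exp(-\Phi(u;y))$. A proof of this statement must derive the conclusion from the assumptions; it is a measure-theoretic disintegration argument. One shows that the joint law $\nu$ of $(u,y)$ satisfies $\nu\ll\nu_0$ with $\tfrac{\dee\nu}{\dee\nu_0}(u,y)\propto\exp(-\Phi(u;y))$ (using $y|u\sim\mathbb{Q}_u$ and $\tfrac{\dee\mathbb{Q}_u}{\dee\mathbb{Q}_0}(y)=\exp(-\Phi(u;y)+\tfrac12|y|_\Gamma^2)$, the $u$-independent factor being absorbed into the normalisation), and then invokes the standard result that a probability measure with a jointly measurable density with respect to a product measure admits a disintegration whose conditionals are obtained by normalising the density in the first variable, the normalisation being legitimate precisely on the set where $Z_\mu>0$. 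The paper itself does not reprove this; it quotes the result from \cite{lecturenotes} and reserves its own work for verifying the hypotheses.

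What you have written is instead a proof of Theorem \ref{thm:existence} (together with the substance of Proposition \ref{prop:phireg}): you verify $\nu_0$-measurability of $\Phi$ and positivity of $Z_\mu$ for each of the three priors, and you establish the Hellinger Lipschitz bound. That material is essentially correct and tracks the paper's treatment of \emph{those} results closely (your a.s.-continuity arguments for $F_2,F_3$, the uniform bound on $\mathcal{G}$ for the geometric priors, the lower bound on $Z_\mu$ via a ball of positive prior mass, and the standard two-term Hellinger estimate all match the paper's strategy). But as a proof of Proposition \ref{thm:bayes} it is circular: you explicitly write ``Proposition \ref{thm:bayes} then applies and yields the existence of $\mu^y$,'' i.e.\ you invoke the very statement you were asked to prove. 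The missing content is the existence of the conditional distribution and the identification of its Radon--Nikodym derivative, neither of which appears anywhere in your argument.
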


We need to verify that the assumptions of this theorem are satisfied. To proceed we first give some regularity properties of the potential $\Phi$:
\begin{proposition}
\label{prop:phireg}
Let $(X,\mathcal{F},\mu_0)$ denote any of the probability spaces associated with the priors introduced in the previous subsection. Then the potential $\Phi:X\times Y\rightarrow \mathbb{R}$ associated with the corresponding forward map, given by (\ref{eq:phi}), admits the following properties.
\begin{enumerate}[(i)]
\item There is a continuous $K:\mathbb{R}^+\times\mathbb{R}^+\rightarrow\mathbb{R}^+$ such that for every $\rho > 0$, $u \in X$ and $y \in Y$ with $|y|_\Gamma < \rho$,
\[
0 \leq \Phi(u;y) \leq K(\rho,\|u\|_X).
\]
In the cases $F = F_{2}$ and $F = F_{3}$, $K$ has no dependence on $\|u\|_X$.
\item For any fixed $y \in Y$, $\Phi(\cdot;y):X\rightarrow\mathbb{R}$ is continuous $\mu_0$-almost surely on the probability space $(X,\mathcal{F},\mu_0)$.
\item There exists $C:\mathbb{R}^+\times\mathbb{R}^+\rightarrow\mathbb{R}^+$ such that for every $y_1, y_2 \in Y$ with\linebreak $\max\{|y_1|_\Gamma,|y_2|_\Gamma\} < \rho$, and every $u \in X$, 
\[
|\Phi(u;y_1) - \Phi(u;y_2)| \leq C(\rho,\|u\|_X)|y_1 - y_2|_\Gamma.
\]
Moreover, $C(\rho,\|\cdot\|_X) \in L^2_{\mu_0}(X)$ for all $\rho > 0$.
\end{enumerate}
\end{proposition}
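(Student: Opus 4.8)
The plan is to reduce all three claims to a single deterministic a priori estimate on the forward map, together with the continuity and integrability properties of the maps $F_i$ from Section \ref{ssec:prior}. Testing the weak formulation (\ref{wpde}) with $w' = v' = \mathcal{M}(\sigma)$, coercivity of $B(\cdot,\cdot;\sigma)$ with constant $\min\{\sigma_-,1/z_+\}$ (from Assumptions \ref{assump:exist}(i)--(ii)), the estimate $r(v') \le |I|_{\ell^2}|V|_{\ell^2}$, and the bound $|V|_{\ell^2} \le C\|v'\|_*$ from the proof of Corollary \ref{cor:fwdcont}, together give $\|\mathcal{M}(\sigma)\|_* \le C|I|_{\ell^2}/\min\{\sigma_-,1/z_+\}$; composing with $\Pi$ and concatenating over the $J$ current patterns yields
\[
|\mathcal{G}(u)|_\Gamma \le C\,\max\{1, 1/\sigma_-\},
\]
where $\sigma = F(u)$ has essential infimum $\sigma_-$ and $C$ depends only on $\Gamma$, $z_+$ and the $I^{(j)}$. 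For $F = F_2$ or $F = F_3$ the conductivity is bounded below by the fixed constant $\sigma^-$, so the right-hand side is a constant independent of $u$; for $F = F_1$ one has $\sigma_- = e^{-\|u\|_\infty}$, so $|\mathcal{G}(u)|_\Gamma \le C e^{\|u\|_\infty}$.

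Part (i) then follows: nonnegativity is immediate since $\Phi$ is a squared norm, while $2\Phi(u;y) = |\Gamma^{-1/2}(\mathcal{G}(u)-y)|^2 \le 2|\mathcal{G}(u)|_\Gamma^2 + 2|y|_\Gamma^2$, combined with the a priori bound, gives $\Phi(u;y) \le K(\rho,\|u\|_X)$ with $K(\rho,t) = C(1+e^{2t})+C\rho^2$ for $F_1$ and $K(\rho,t) = C + C\rho^2$, independent of $t$, for $F_2$ and $F_3$; both are continuous. For part (iii) I would use $|a|^2 - |b|^2 = \langle a-b, a+b\rangle$ with $a = \Gamma^{-1/2}(\mathcal{G}(u)-y_1)$ and $b = \Gamma^{-1/2}(\mathcal{G}(u)-y_2)$ to obtain
\[
|\Phi(u;y_1) - \Phi(u;y_2)| \le \tfrac12|y_1-y_2|_\Gamma\bigl(2|\mathcal{G}(u)|_\Gamma + |y_1|_\Gamma + |y_2|_\Gamma\bigr) \le \bigl(|\mathcal{G}(u)|_\Gamma + \rho\bigr)|y_1-y_2|_\Gamma,
\]
so one may take $C(\rho,\|u\|_X) = |\mathcal{G}(u)|_\Gamma + \rho$. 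For $F_2, F_3$ this is bounded, hence trivially in $L^2_{\mu_0}(X)$. For $F_1$ it is dominated by $C(e^{\|u\|_\infty}+\rho)$, and $\int_X e^{2\|u\|_\infty}\,\mu_0(\dee u) < \infty$ by Fernique's theorem for the Gaussian $\mu_0 = N(m_0,\mathcal{C}_0)$ on $C^0(\overline{D})$ (using $2\|u\|_\infty \le \alpha\|u\|_\infty^2 + \alpha^{-1}$ and integrability of $e^{\alpha\|u\|_\infty^2}$ for small $\alpha$); hence $C(\rho,\|\cdot\|_X) \in L^2_{\mu_0}(X)$.

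For part (ii), observe that $\Phi(\cdot;y)$ is the composition of $\mathcal{G} = \Pi\circ\mathcal{M}\circ F$ (concatenated over the current patterns) with the globally continuous quadratic $z\mapsto \tfrac12|\Gamma^{-1/2}(z-y)|^2$ on $\mathbb{R}^{JL}$, so it suffices to prove that $\mathcal{G}$ is $\mu_0$-almost surely continuous. Here I would combine Corollary \ref{cor:fwdcont} --- which gives continuity of $\Pi\circ\mathcal{M}$ under uniform convergence of conductivities, and under convergence in measure provided the conductivities remain within fixed bounds --- with the continuity properties of the $F_i$. For $F_1$, continuity $C^0(\overline{D})\to\mathcal{A}(D)$ in the supremum norm holds at every point (Section \ref{ssec:prior}), so $\mathcal{G}$ is continuous on all of $X$. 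For $F_2$, Proposition \ref{prop:starcts} shows $F_2$ is continuous into convergence in measure at every $(r,x_0)$ with $r$ Lipschitz, and the fixed values $u_\pm$ provide the uniform bounds needed for Corollary \ref{cor:fwdcont}; the set of Lipschitz $r$ has full $\sigma_0$-measure (a mild additional assumption on the covariance of $\sigma_0$), so $\mathcal{G}$ is $\mu_0$-a.s.\ continuous. For $F_3$, Proposition \ref{prop:lvlcts} gives continuity into convergence in measure at every $u$ for which the level sets $D_i^0$ have zero Lebesgue measure, which holds $\mu_0$-a.s.\ for the non-degenerate Gaussian $\mu_0$ by \cite{levelset}, and the fixed values $f_i$ again supply the uniform bounds. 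I expect the main obstacle to be precisely this last part --- identifying, for each prior, the full-$\mu_0$-measure set on which $F_i$ is continuous in the in-measure topology, and pushing this through Corollary \ref{cor:fwdcont}, whose in-measure branch requires uniform conductivity bounds (automatic for $F_2, F_3$, and circumvented for $F_1$ by using the stronger uniform-convergence branch). The energy estimate and the two algebraic manipulations in (i) and (iii) are routine by comparison.
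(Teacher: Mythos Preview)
Your argument is correct and runs largely parallel to the paper's, but with one genuine methodological difference worth noting. For the a~priori bound on $|\mathcal{G}(u)|_\Gamma$ underlying parts~(i) and~(iii), the paper does not test the weak formulation directly; instead it invokes the perturbation estimate~(\ref{eq:propcts2}) from the proof of Proposition~\ref{prop:fwdcont}, $\|\mathcal{M}_j(\sigma_1)-\mathcal{M}_j(\sigma_2)\|_*\le C\|\mathcal{M}_j(\sigma_2)\|_*\|\sigma_1-\sigma_2\|_\infty$, and specialises to $\sigma_2\equiv 1$ to obtain $\|\mathcal{M}_j(\sigma)\|_*\le C(1+\|\sigma\|_\infty)$. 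Thus the paper's bound is controlled by the \emph{upper} bound of the conductivity, whereas your coercivity argument yields control via the \emph{lower} bound $\sigma_-$. Both routes give $|\mathcal{G}(u)|_\Gamma\le Ce^{\|u\|_\infty}$ for $F_1$ and a uniform constant for $F_2,F_3$, so the downstream arguments in~(i) and~(iii) are identical; your energy estimate is arguably the more elementary of the two, while the paper's reuses machinery already in place. For part~(ii) your treatment matches the paper's, and you are in fact slightly more careful about the Lipschitz hypothesis on $r$ required by Proposition~\ref{prop:starcts}(ii)--(iii): the paper asserts continuity of $F_2$ ``for any $u$'' without flagging this, whereas you correctly identify it as a (mild) regularity assumption on $\sigma_0$ needed for the $\mu_0$-a.s.\ statement. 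Your use of Fernique's theorem for the exponential moment is exactly what the paper means by ``Gaussians have exponential moments''.
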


\begin{proof}
\begin{enumerate}[(i)]
\item From equation (\ref{eq:propcts2}) in the proof of Proposition \ref{prop:fwdcont}, we see that there exists $C > 0$ such that each $\mathcal{M}_j:\mathcal{A}(D)\rightarrow\mathbb{H}$ satisfies
\[
\|\mathcal{M}_j(\sigma_1) - \mathcal{M}_j(\sigma_2)\|_* \leq C\|\mathcal{M}_j(\sigma_2)\|_*\|\sigma_1 - \sigma_2\|_\infty
\]
for all $\sigma_1,\sigma_2 \in \mathcal{A}(D)$. Taking $\sigma_2 \equiv 1$, say, we deduce that
\[
\|\mathcal{M}_j(\sigma_1)\|_* \leq C\|\mathcal{M}_j(1)\|_*\|\sigma_1 - 1\|_\infty + \|\mathcal{M}_j(1)\|_* \leq C(1+\|\sigma_1\|_\infty).
\]
Hence $\|\sigma\|_\infty < \rho$ implies that $\|\mathcal{M}_j(\sigma)\|_* < C(1+\rho)$. By Corollary \ref{cor:fwdcont}, it follows that $\Pi\circ\mathcal{M}_j:\mathcal{A}(D)\rightarrow\mathbb{R}^L$ is bounded on bounded sets with respect to $\|\cdot\|_\infty$ for all $j$.

In the case $F = F_1$, if $u \in X$ then $\|F(u)\|_\infty \leq e^{\|u\|_X}$. It follows that $|\mathcal{G}(u)|_\Gamma \leq \max_j |\mathcal{G}_j(u)|_\Gamma \leq C(1+e^{\|u\|_X})$.

Now note that
\[
\Phi(u;y) \leq |\mathcal{G}(u)|_\Gamma^2 + |y|_\Gamma^2. 
\]
Then for any $y \in Y$ with $|y| < \rho$, we may bound
\[
\Phi(u;y) \leq C(1 + e^{2\|u\|_X} + \rho^2) =: K(\rho,\|u\|_X).
\]
In the cases $F = F_{2}$ and $F = F_{3}$, we have that $\|F(u)\|_\infty$ is bounded uniformly over $u \in X$ and so $|\mathcal{G}(u)|_\Gamma \leq \max_j|\mathcal{G}_j(u)|_\Gamma \leq C$. Hence we obtain the bound
\[
\Phi(u;y) \leq C(1+\rho^2) =: K(\rho).
\]

\item Let $u \sim \mu_0$ and suppose $F:X\rightarrow\mathcal{A}(D)$ is such that $\|u_\eps-u\|_X \rightarrow 0$ implies that $F(u_\eps)\rightarrow F(u)$ either uniformly or in measure. Then Proposition \ref{prop:fwdcont} tells us that $\mathcal{M}_j\circ F:X\rightarrow\mathbb{H}$ is continuous at $u$ for each $j$. The projection $\Pi:\mathbb{H}\rightarrow\mathbb{R}^L$ is continuous, and so $\mathcal{G}_j = \Pi\circ\mathcal{M}_j\circ F$ is continuous at $u$ for each $j$. In \S\ref{ssec:prior} it is shown that this is true for $F = F_1$ and $F = F_2$ for any $u$. For $F = F_{3}$ it is only true at points $u$ whose level sets have zero measure, however since we are assuming $u \sim \mu_0$, a Gaussian measure, it follows from Proposition 7.2 in \cite{levelset} that $u$ $\mu_0$-almost surely has this property.
\item Let $u \in X$ and $y_1,y_2 \in Y$ with $\max\{|y_1|_\Gamma,|y_2|_\Gamma\} < \rho$. Then we have
\begin{align*}
|\Phi(u;y_1) - \Phi(u;y_2)| &= \frac{1}{2}|\langle y_1+y_2-2\mathcal{G}(u),y_1-y_2\rangle_\Gamma|\\
&\leq \frac{1}{2}(|y_1|_\Gamma+|y_2|_\Gamma+2|\mathcal{G}(u)|_\Gamma)|y_1-y_2|_\Gamma\\
&\leq (\rho + |\mathcal{G}(u)|_\Gamma)|y_1-y_2|_\Gamma\\
&=: \mmd{\tilde{C}(\rho,u)|y_1-y_2|_\Gamma}
\end{align*}
We now consider cases separately based on the prior. In the log-Gaussian case, we may bound
\[
\mmd{\tilde{C}(\rho,u) \leq C(1+ \rho + e^{\|u\|_X}) =: C(\rho,\|u\|_X)}
\]
using the bound from the proof of part (i). Square-integrability of $C(\rho,\|\cdot\|_X)$ follows since Gaussians have exponential moments.

In the star-shaped and level set prior cases, we have that $|\mathcal{G}(u)|$ is bounded uniformly by a constant. We may hence bound $\tilde{C}(\rho,u)$ above by some\linebreak $C(\rho,\|u\|_X) := C(1+\rho)$ that is independent of $u$, and so again the square-integrability follows.
\end{enumerate}
\end{proof}

\begin{proof}[Proof of Theorem \ref{thm:existence}]
Define the product measure $\nu_0(\dee u, \dee y) = \mu_0(\dee u)\mathbb{Q}_0(\dee y)$ on $X\times Y$. We showed in Proposition \ref{prop:phireg} that $\Phi(\cdot;y):X\rightarrow\mathbb{R}$ is almost-surely continuous under the prior for all $y \in Y$, and $\Phi(u;\cdot):Y\rightarrow\mathbb{R}$ is locally Lipschitz for all $u \in X$. Together these imply that $\Phi:X\times Y\rightarrow\mathbb{R}$ is almost-surely jointly continuous under $\nu_0$. To see this, let $(u,y) \in X\times Y$ and let $(u_n,y_n)_{n\geq 1} \subseteq X\times Y$ be an approximating sequence so that $\|u_n-u\|_X \rightarrow 0$ and $|y_n-y|_\Gamma\rightarrow 0$. Then we have
\begin{align*}
|\Phi(u_n,y_n) - \Phi(u,y)| &\leq |\Phi(u_n,y_n) - \Phi(u_n,y)| + |\Phi(u_n,y) - \Phi(u,y)|.
\end{align*}
The second term tends to zero $\mu_0$-almost surely by continuity. For the first term, note that the sequences $(\|u_n\|_X)_{n\geq 1}$ and $(|y_n|_\Gamma)_{n\geq 1}$ are bounded, by $K$ and $R$ respectively, say. Then we can use the local Lipschitz property to deduce that
\begin{align*}
|\Phi(u_n,y_n) - \Phi(u_n,y)| &\leq C(R,K)|y_n-y|_\Gamma
\end{align*}
since $C(\cdot,\cdot):\mathbb{R}\times\mathbb{R}\rightarrow\mathbb{R}$, \mmd{as defined in the proof of Proposition \ref{prop:phireg}}, is monotonically increasing in both components. Therefore this term tends to zero, and we obtain the desired continuity. It follows, see for example Lemma 6.1 in \cite{levelset}, that $\Phi$ is $\nu_0$-measurable.

For the lower bound on $Z_\mu$, we consider cases separately based on the prior. First we consider the log-Gaussian and level set prior cases so that $\mu_0$ is Gaussian. Let $B \subseteq X$ be any ball. Fix any $\rho > |y|_\Gamma$ and define
\[
R = \sup_{u \in B} K(\rho,\|u\|_X)
\]
where $K$ is the upper bound from Proposition \ref{prop:phireg}(i). This supremum is finite by the continuity of $K$. Then we have
\begin{align*}
\int_X \exp(-\Phi(u;y))\,\mu_0(\dee u) &\geq \int_B \exp(-\Phi(u;y))\,\mu_0(\dee u)\\
&\geq \int_B \exp(-K(\rho,\|u\|))\,\mu_0(\dee u)\\
&\geq \exp(-R)\mu_0(B).
\end{align*}
Since $\mu_0$ is Gaussian, $\mu_0(B) > 0$ and so $Z_\mu > 0$.

In the star-shaped prior case, proceed as above but take $B = B_1\times D$ where $B_1 \subseteq C^0_P(R_{d-1})$ is any ball. Then we have
\[
\mu_0(B) = (\sigma_0\times\tau_0)(B_1\times D) = \sigma_0(B_1)\tau_0(D) > 0
\]
by the assumption that $\sigma_0$ assigns positive mass to balls, and so again $Z_\mu > 0$. The above hold for all $y \in Y$, and so in particular for $y$ $\mathbb{Q}_0$-almost-surely. We may now apply Bayes' Theorem \ref{thm:bayes} to obtain the existence of $\mu^y$.

The proof of well-posedness is almost identical to that of the analogous result Theorem 2.2 in \cite{levelset} and is hence omitted.
\end{proof}

\section{Numerical Experiments}
\label{sec:num}
We investigate the effect of the choice of prior on the recovery of certain binary conductivity fields. The specific fields we consider are shown in Figure \ref{fig:truth}, where blue represents a conductivity of $1$ and yellow a conductivity of $2$. Simulations are performed using the EIDORS software \cite{eidors} to solve the forward model \mmd{using a first order finite element method}; a mesh of $43264$ elements is used to create the data and a mesh of $10816$ elements is used for simulations in order to avoid an inverse crime \cite{kaipio}.

In subsection \ref{ssec:mcmc} we describe the MCMC sampling algorithm that we will use. In subsection \ref{ssec:setup} we define the parameters we will use for the forward model and the MCMC simulations. We also describe how the data is created, and define our choices of prior distributions. Finally in subsection \ref{ssec:results} we present the results of the simulation, looking at quality of reconstruction, convergence of the algorithm and some properties of the posterior distribution.

\begin{figure}
\begin{subfigure}{.49\textwidth}
\begin{center}
\includegraphics[width=\textwidth]{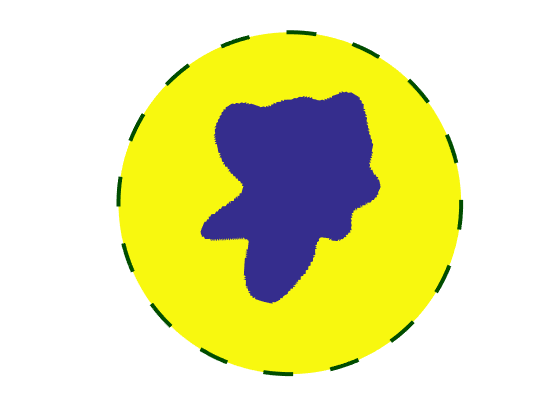}
\caption{Conductivity A}
\end{center}
\end{subfigure}
\begin{subfigure}{.49\textwidth}
\begin{center}
\includegraphics[width=\textwidth]{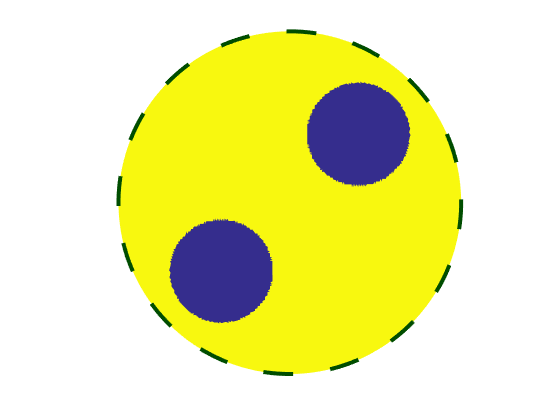}
\caption{Conductivity B}
\end{center}
\end{subfigure}
\caption{The two binary fields we attempt to recover. Conductivity A is drawn from the star-shaped prior, with $\sigma_0 = h^{\#}\big[N(0.5,10^9\cdot(30^2 - \mathcal{A}_D)^{-3})\big]$, $h(z) = (1+\tanh{z})/2$, and $\tau_0 = U([-0.5,0.5]^2)$. Conductivity B is constructed explicitly, rather than being drawn from a prior.}
\label{fig:truth}
\end{figure}

\subsection{Sampling Algorithm}
\label{ssec:mcmc}
We aim to produce a sequence of samples from $\mu^y$ on $X$, where $\mu^y$ is given by (\ref{eq:posterior}). We make use of the preconditioned Crank-Nicolson Markov Chain Monte Carlo (pCN-MCMC) method. The pCN-MCMC method is a modification of the standard Random Walk Metropolis \mmd{(RWM)} MCMC method which is well-adapted to Gaussian priors in high dimensions. It was introduced in \cite{diff_bridges}, and its dimension independent properties are analysed and illustrated numerically in \cite{spectralgap} and \cite{Cotter2013} respectively; the pCN nomenclature was introduced in \cite{Cotter2013}. In the case of the star-shaped prior, we use a Metropolis-within-Gibbs algorithm \cite{mwg}, alternately updating the field with the pCN method above and updating the centre with the standard RWM method.

An advantage of these MCMC methods is that derivatives of the forward map are not needed, only black-box solution of the forward model. However in order to accurately compute some quantity of interest, such as the conditional mean, we may need to produce a very large number of samples and tuning the algorithm to minimise this effect is important. For this reason we compute the effective sample size from the integrated autocorrelation (neglecting a burn-in period) of a quantity of interest, as in \cite{mcmc_kass}.

\subsection{Data and Parameters}
\label{ssec:setup}
We work on a circular domain of radius $1$, with 16 equally spaced electrodes on its boundary providing 50\% coverage. We take all contact impedances $z_l = 0.01$. We stimulate adjacent electrodes with a current of 0.1, so that the matrix of stimulation patterns $I = (I^{(j)})_{j=1}^{15} \in \mathbb{R}^{16\times 15}$ is given by
\[
I = 0.1\times\left(
\begin{array}{cccc}
+1 & 0 & \cdots & 0\\
-1 & +1 &\cdots & 0\\
0 & -1 & \ddots & 0\\
\vdots & \vdots & \ddots & +1\\
 0 & 0 & 0 & -1
\end{array}\right)
\]
The conductivity is chosen such that it takes values $1$ and $2$. We perturb the measurements with white noise $\eta \sim N(0,\gamma^2I)$, $\gamma = 0.0002$, so that the mean relative error on both sets of data is approximately 10\%. The true conductivity fields used to generate the data, henceforth referred to as \emph{Conductivity A} and \emph{Conductivity B}, are shown in Figure \ref{fig:truth}. In all cases we generate $N = 2.5\times 10^6$ samples with a burn-in of $k_0 = 5\times 10^5$ samples.

Our priors on fields will make use of Gaussians with covariances of the form
\begin{align}
\label{eq:wmprior}
\mathcal{C} = q(\tau^2 - \Delta)^{-\alpha}.
\end{align}
These are essentially rescaled Whittle-Matern covariances \cite{whittlematern}, with $\tau$ representing the inverse length scale of the samples, $\alpha$ proportional to their regularity, and $q$ proportional to their amplitude.

In what follows, denote by $\mathcal{A}_N$ the Laplacian with Neumann boundary conditions on $[-1,1]^2$, restricted to $D$, so that its domain is given by
\[
\mathcal{D}(\mathcal{A}_N) = \left\{u|_D\;\bigg|\; u\in H^2([-1,1]^2), \frac{\partial u}{\partial n} = 0\right\}.
\]
Defining the Laplacian first on a square and then restricting to $D$ will allow for efficient generation of Gaussian samples via the fast Fourier transform. Note that if we were to consider priors of the form (\ref{eq:wmprior}) with $\tau = 0$, we should restrict $\mathcal{D}(\mathcal{A}_N)$ further to ensure the invertibility of $\mathcal{A}_N$.
 
Additionally, denote by $\mathcal{A}_D$ the Laplacian with Dirichlet boundary conditions on $R_1 = (-\pi,\pi]$, so that its domain is given by
\[
\mathcal{D}(\mathcal{A}_D) = \big\{u \in H^2\big((-\pi,\pi]\big)\;\big|\; u(-\pi) = u(\pi) = 0\big\}.
\]

\subsubsection{Gaussian prior}
States are defined on a grid of $2^7\times2^7$ points. For both simulations the pCN jump parameter $\beta$ is taken to be 0.01, with choice of prior
\[
\mu_0 = \exp^\#\big[ N(0.5\log{2},10^{16}\cdot(40^2 - \mathcal{A}_N)^{-6})\big].
\]

\subsubsection{Star-shaped prior}
Radial states are defined on a grid of $2^8$ points. For  Conductivity A, we choose the pCN jump parameter $\beta = 0.03$ and the RWM jump parameter $\delta = 0.01$. For Conductivity B we choose $\beta = 0.01$ and $\delta = 0.005$. For both simulations we use the choice of prior $\mu_0 = \sigma_0\times\tau_0$, with
\[
\sigma_0 = h^\#\big[N(0.5,10^{9}\cdot(30^2-\mathcal{A}_D)^{-3})\big],\;\;\;\tau_0 = U([-0.5,0.5]^2),
\]
where $h(z) = (1+\tanh{z})/2$.

\subsubsection{Level set prior}
States are defined on a grid of $2^7\times2^7$ points. For both simulations the pCN jump parameter $\beta$ is taken to be 0.005, with choice of prior
\[
\mu_0 = N(0,(35^2-\mathcal{A}_N)^{-5}).
\]

\subsection{Results}
\label{ssec:results}
\subsubsection{Recovery}
Figures \ref{fig:output_A} and \ref{fig:output_B} show conductivities arising from the MCMC chains, and Figure \ref{fig:phivalues} shows the values of the misfit $\Phi$ at the different sample means. \mmd{We consider two different types of mean conductivities. First the sample means are calculated in the sample spaces $X_i$ and then pushed forward to the conductivity space by the maps $F_i$, so that we produce estimates of $F_i(\mathbb{E}(u))$. This preserves the binary nature of the fields in the cases of the star-shaped and level set priors. We also consider the sample means of the pushforwards of the posteriors by $F_i$, $\mathbb{E}(F_i(u))$, which do not preserve the binary property but provide some visualisation of the interface uncertainty in the posterior.}

For Conductivity A, the sample mean $F_2(\mathbb{E}(u))$ arising from the star-shaped prior provides a better reconstruction than the other two prior choices. This is expected, since the true conductivity was drawn from this prior. Whilst the sample means arising from the level set prior are fairly close to the true conductivity (both visually and in terms of $\Phi$), the boundary of the interface appears to have too large a length-scale. Appropriate choice of prior length-scale is a key issue the with the level set method; treating the length-scale hierarchically as another unknown in the problem may be beneficial. On the other hand

The sample means arising from the Gaussian prior fail to recover both the sharp interface and the values of the conductivity, which is reflected in the large values $\Phi$ takes. \mmd{Nonetheless, general qualitative properties of the true conductivity can be seen in the sample means. As with the level set prior, reconstruction could be improved by treating hierarchically the parameters in the prior such as length-scale, marginal variance and mean.}

For Conductivity B, the level set prior is most effective in the reconstruction, since a specific number of inclusions isn't fixed a priori as it is for the star-shaped prior. Again the Gaussian prior fails to recover both the sharp interface and the values of the conductivity, however it appears to do a better job than the star-shaped prior at identifying the location and shape of the two inclusions.

In both of the above cases, even though the individual samples coming from using the level set prior contain many small inclusions, these do not show up in the \mmd{sample means $F_3(\mathbb{E}(u))$, though they are visible in the means $\mathbb{E}(F_3(u))$}. 

\mmd{In terms of the values of the misfit as shown in Figure \ref{fig:phivalues}, in all but one of the above cases the mean $\mathbb{E}(F_i(u))$ provides a better fit than $F_i(\mathbb{E}(u))$ despite the lack of the binary property. The quality of the recovery cannot be assessed solely on the misfit however; an advantage of the Bayesian approach is that both means are available (along with the full posterior), and whichever is more appropriate could depend on the context.}

\subsubsection{Convergence}
In Figure \ref{fig:ess}, we show the approximate effective sample size \mmd{(ESS)} associated with different quantities of interest. For all choices of prior, these are significantly smaller than the total $2.5\times 10^{6}$ samples generated. Many more samples may hence be required to produce accurate approximations of the posterior mean.

The chain associated with the star-shaped prior results in the largest ESS, likely because we are only attempting to infer $2^8 + 2$ parameters rather than $2^{14}$ parameters as in the log-Gaussian and level set cases.

In order to accelerate the convergence of the MCMC we can adjust the jump parameters $\beta$ and $\delta$. Larger choices of these parameters mean that accepted states will be less correlated with the current state, however the proposed states are less likely to be accepted. The choice $\beta = 1$ in pCN produces proposed states that are independent of the current state, but  dependent upon how far the prior is from the posterior, very few or no states may be accepted so that the chain never moves. Similarly, smaller choices of these jump parameters mean that more proposals will be accepted, but the states will be more correlated. A balance hence must be achieved -- in our simulations we choose the parameters such that approximately 20-30\% of proposals are accepted, though in general the optimal acceptance rate is not known \cite{acceptance}.

Alternatively, reconstruction may be accelerated by looking at an approximation of the posterior instead of the exact posterior, for example using the ensemble Kalman filter \cite{enkf} or a sequential Monte Carlo method \cite{smc}. We could also initialise the MCMC chains from EnKF estimates to significantly reduce the burn-in period and hence computational cost. If the derivative of the forward map is available, Hybrid Monte Carlo (HMC) methods could be used to accelerate the convergence \cite{hmc}. Emulators could also be used to reduce the computational burden of derivative calculation, allowing the use of geometric MCMC methods such as Riemannian Manifold Hamiltonian Monte Carlo (RHMC) and Lagrangian Monte Carlo (LMC) \cite{emulators}.

\subsubsection{Posterior Behaviour}
In Figures \ref{fig:d_gauss}-\ref{fig:d_lvl} we show kernel density estimates for a number of quantities associated with each posterior distribution. \mmd{These are calculated using subsequences of the MCMC chains, with lengths of the same order of magnitude as the effective sample sizes, in order to avoid over-fitting.} The most regular densities arise in the star-shaped case, with the distribution of all quantities appearing to be very close to uni-modal. More irregularity is seen for the log-Gaussian case, especially in the joint distributions, but they are still very close to uni-modal.

The least regular,  more multi-modal densities come from the level set prior. One reason for this is likely the lack of identifiability of the level set function: the forward model only `sees' the zero level set of the state, and hence cannot distinguish between infinitely many different states. The prior can however distinguish between these states, and will weight them appropriately, which can help explain the shape of the posterior densities.

\begin{figure}
\begin{subfigure}{\textwidth}
\centering
\includegraphics[width=0.3\textwidth]{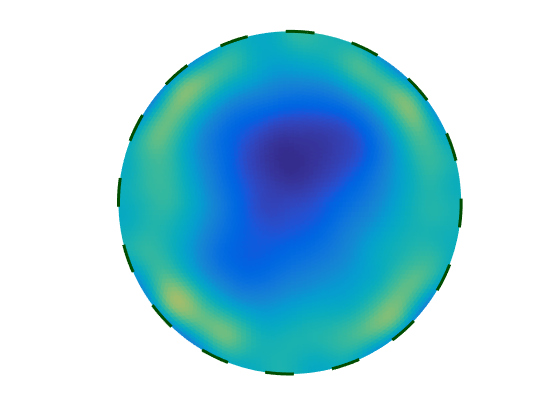}
\includegraphics[width=0.3\textwidth]{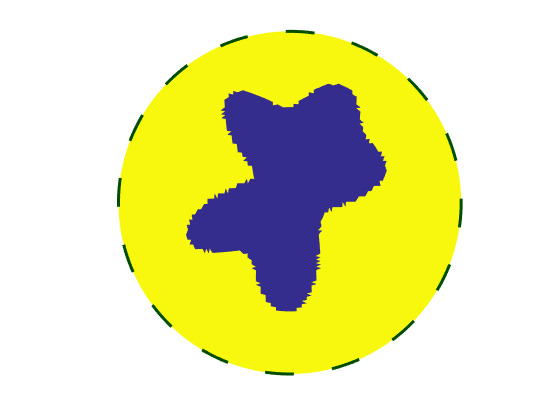}
\includegraphics[width=0.3\textwidth]{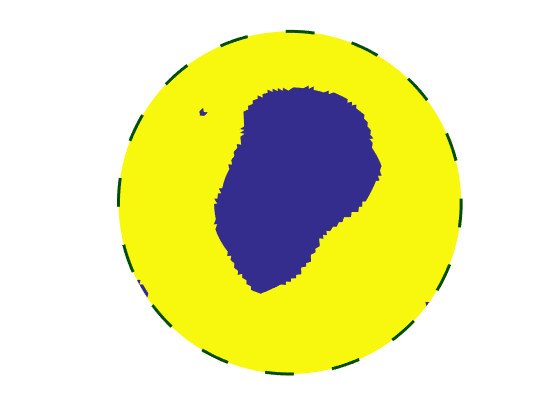}
\includegraphics[width=0.3\textwidth]{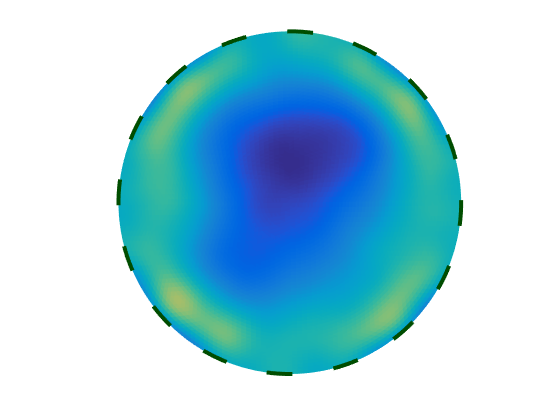}
\includegraphics[width=0.3\textwidth]{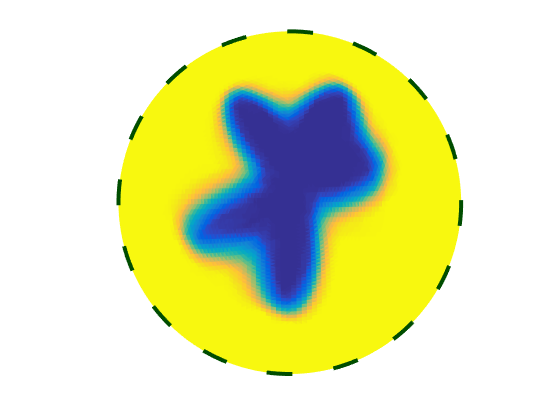}
\includegraphics[width=0.3\textwidth]{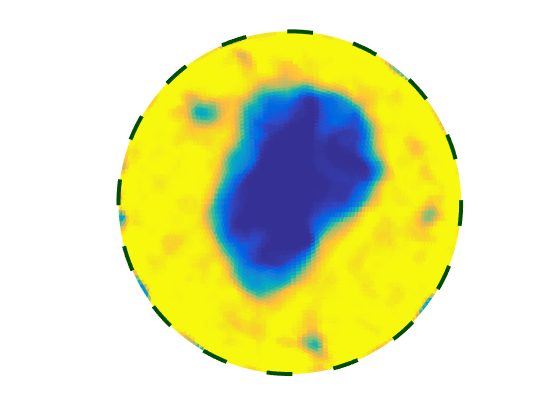}
\caption{(Top) Estimates of $F_i(\mathbb{E}(u))$. (Bottom) Estimates of $\mathbb{E}(F_i(u))$.}
\end{subfigure}\vspace{0.5cm}

\begin{subfigure}{\textwidth}
\centering
\includegraphics[width=0.3\textwidth]{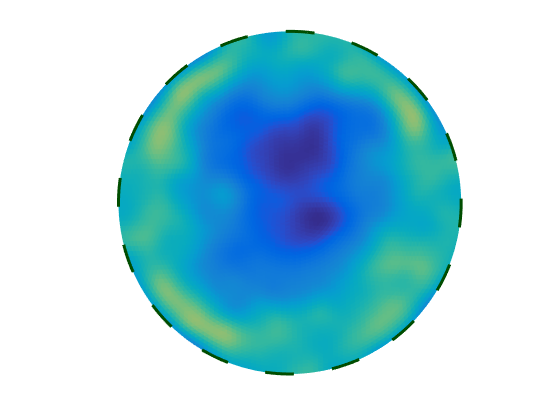}
\includegraphics[width=0.3\textwidth]{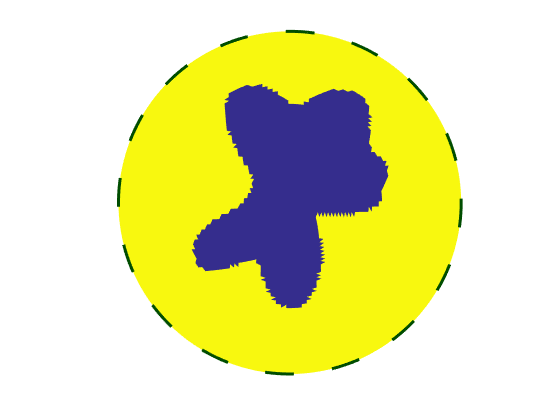}
\includegraphics[width=0.3\textwidth]{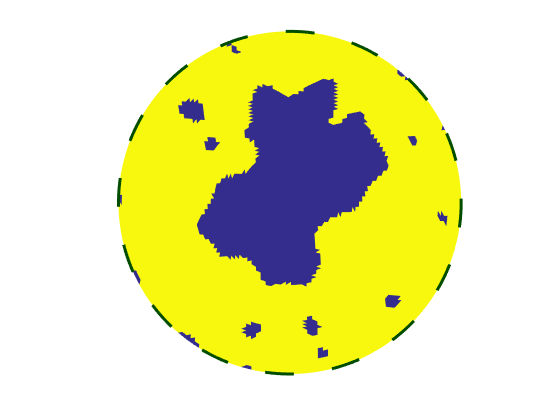}
\includegraphics[width=0.3\textwidth]{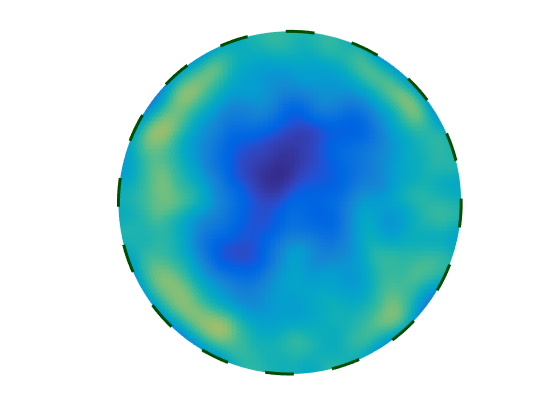}
\includegraphics[width=0.3\textwidth]{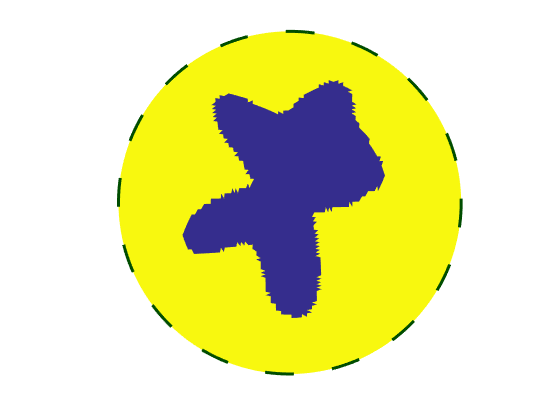}
\includegraphics[width=0.3\textwidth]{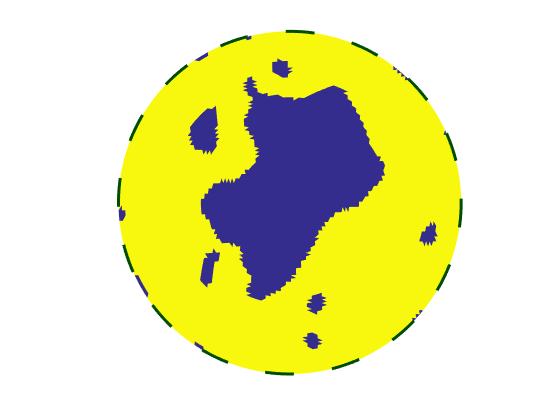}
\includegraphics[width=0.3\textwidth]{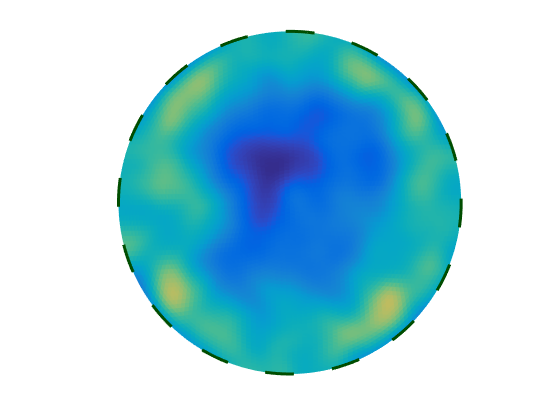}
\includegraphics[width=0.3\textwidth]{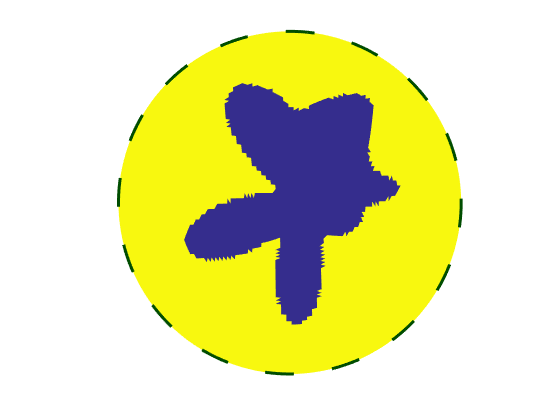}
\includegraphics[width=0.3\textwidth]{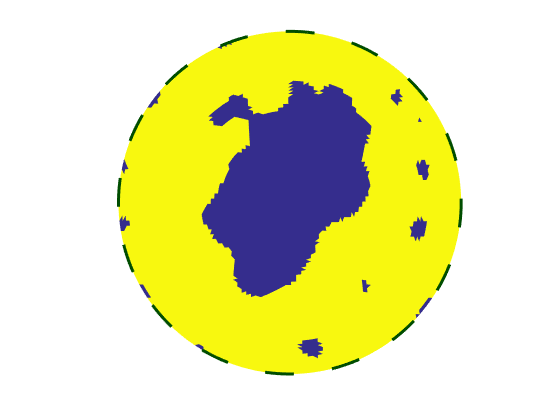}
\includegraphics[width=0.3\textwidth]{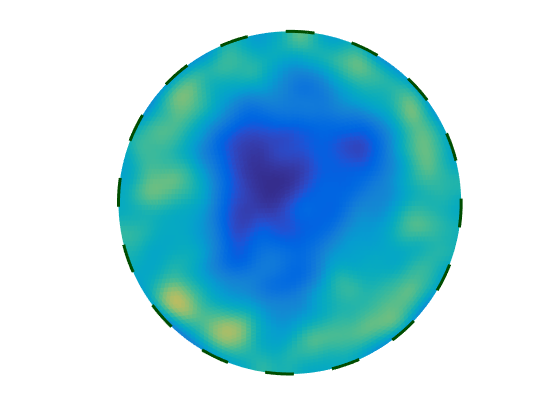}
\includegraphics[width=0.3\textwidth]{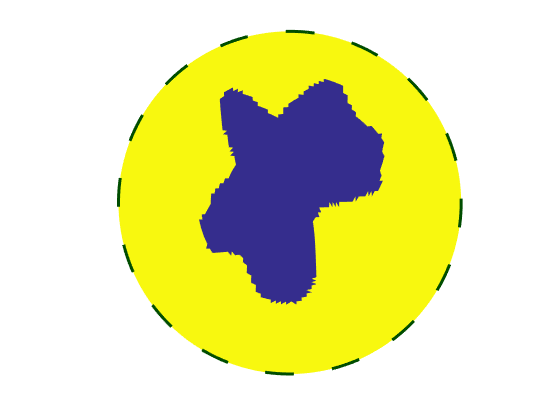}
\includegraphics[width=0.3\textwidth]{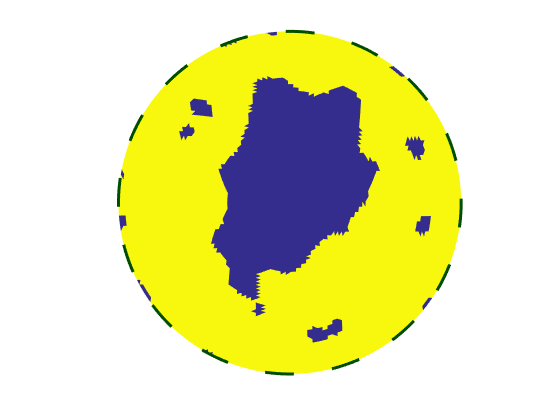}
\caption{Posterior samples.}
\end{subfigure}
\caption{Recovery of Conductivity A. From left to right, the log-Gaussian, star-shaped and level set priors are used.}
\label{fig:output_A}
\end{figure}

\begin{figure}
\begin{subfigure}{\textwidth}
\centering
\includegraphics[width=0.3\textwidth]{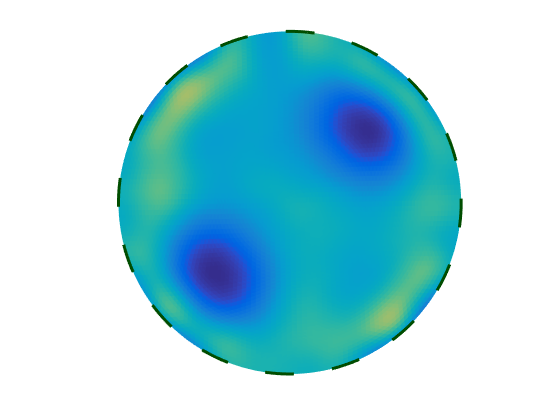}
\includegraphics[width=0.3\textwidth]{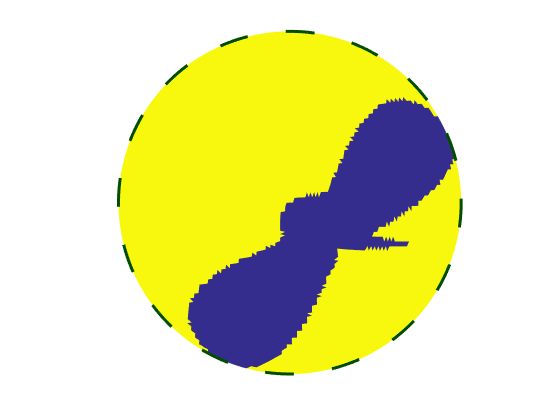}
\includegraphics[width=0.3\textwidth]{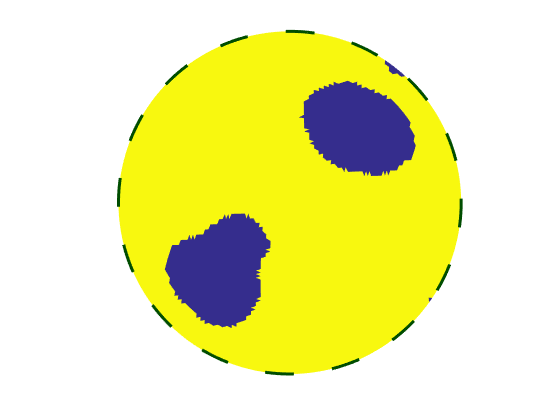}
\includegraphics[width=0.3\textwidth]{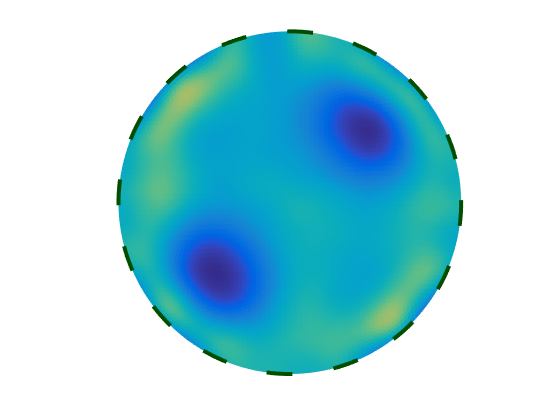}
\includegraphics[width=0.3\textwidth]{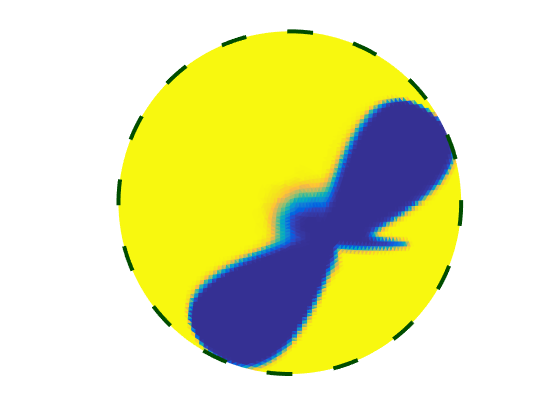}
\includegraphics[width=0.3\textwidth]{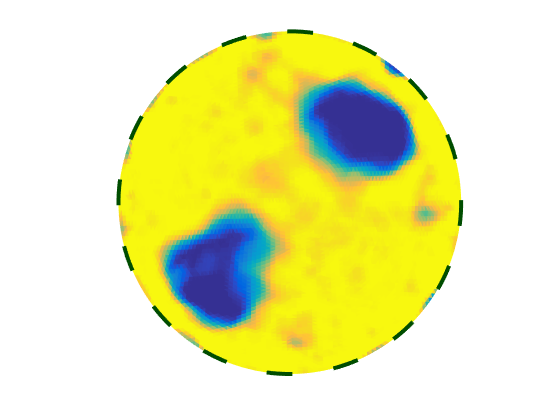}
\caption{(Top) Estimates of $F_i(\mathbb{E}(u))$. (Bottom) Estimates of $\mathbb{E}(F_i(u))$.}
\end{subfigure}\vspace{0.5cm}

\begin{subfigure}{\textwidth}
\centering
\includegraphics[width=0.3\textwidth]{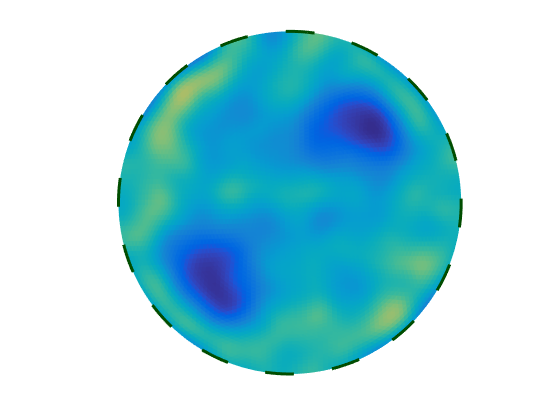}
\includegraphics[width=0.3\textwidth]{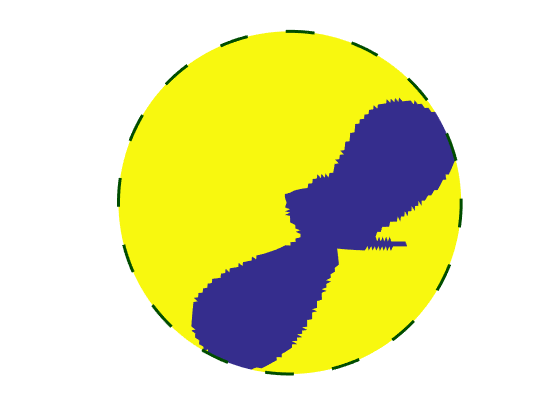}
\includegraphics[width=0.3\textwidth]{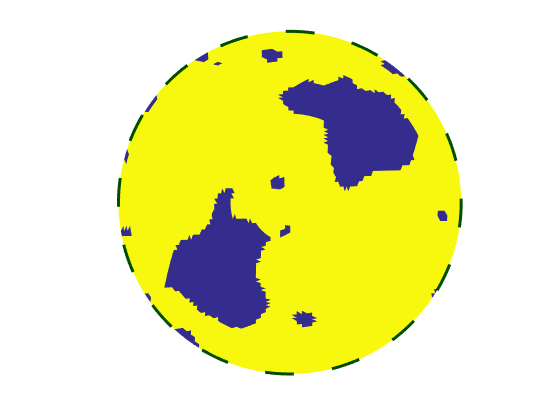}
\includegraphics[width=0.3\textwidth]{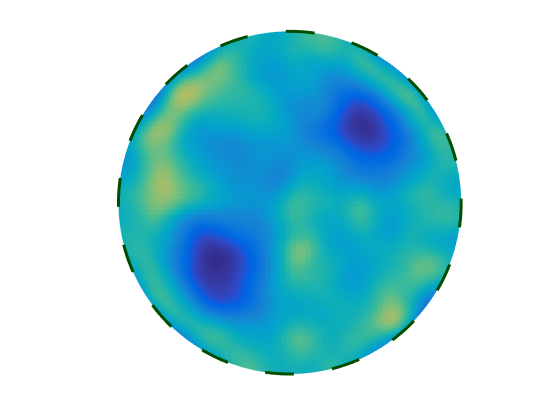}
\includegraphics[width=0.3\textwidth]{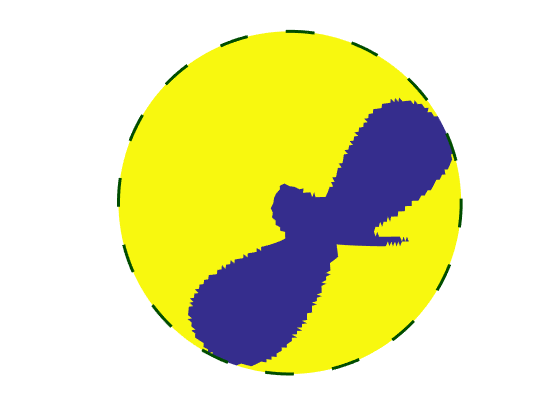}
\includegraphics[width=0.3\textwidth]{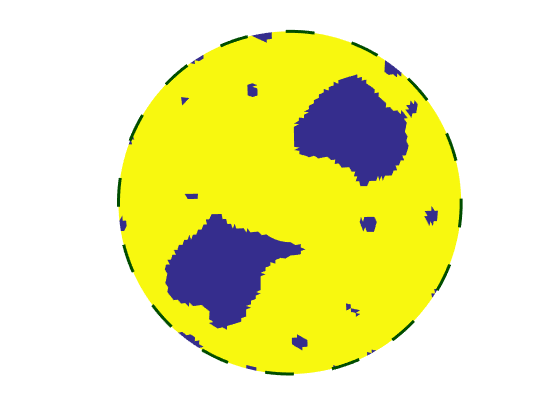}
\includegraphics[width=0.3\textwidth]{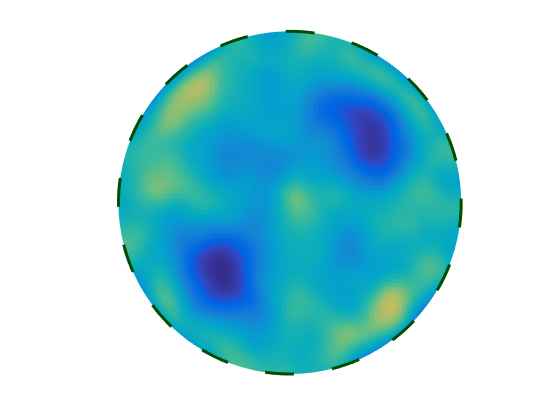}
\includegraphics[width=0.3\textwidth]{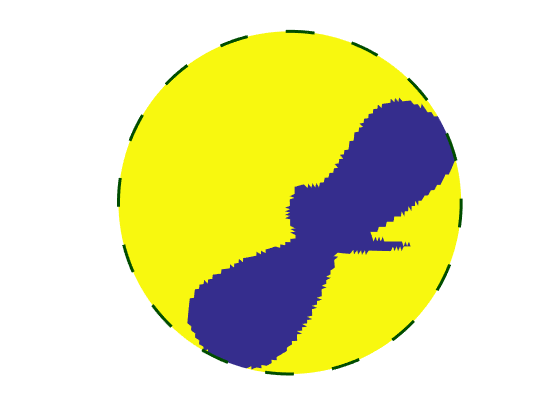}
\includegraphics[width=0.3\textwidth]{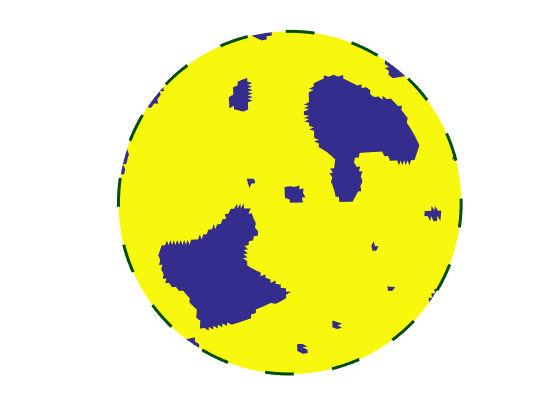}
\includegraphics[width=0.3\textwidth]{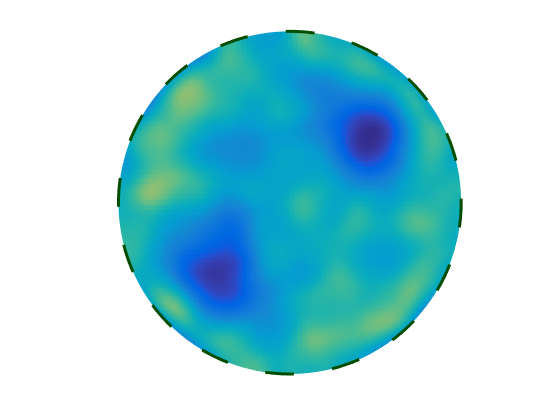}
\includegraphics[width=0.3\textwidth]{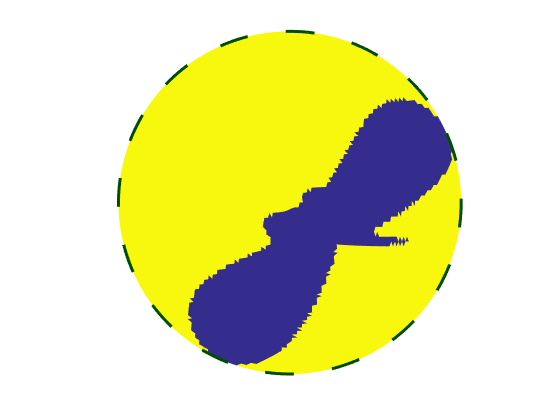}
\includegraphics[width=0.3\textwidth]{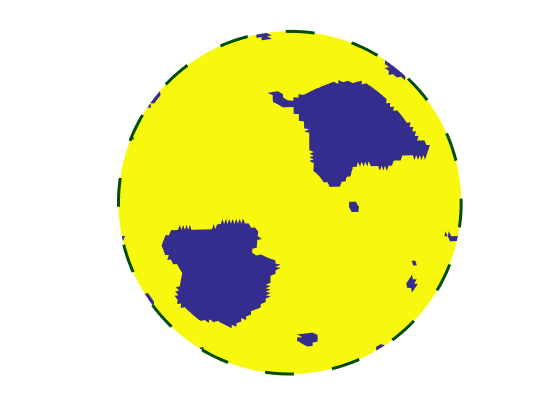}
\caption{Posterior samples.}
\end{subfigure}
\caption{Recovery of Conductivity B. From left to right, the log-Gaussian, star-shaped and level set priors are used.}
\label{fig:output_B}
\end{figure}

\begin{figure}
\begin{subfigure}{\textwidth}
\centering
\def\arraystretch{1.2}
\begin{tabular}{c|c|c}
& Conductivity A & Conductivity B\\
\hline
Log-Gaussian prior & 115960 & 122180\\
Star-shaped prior & 228.84 & 45013\\
Level set prior & 284.28 & 193.03
\end{tabular}
\caption{The values of $\Phi$ at $F_i(\mathbb{E}(u))$.}
\end{subfigure}\vspace{0.5cm}
\begin{subfigure}{\textwidth}
\centering
\def\arraystretch{1.2}
\begin{tabular}{c|c|c}
& Conductivity A & Conductivity B\\
\hline
Log-Gaussian prior & 111960 & 117950\\
Star-shaped prior & 284.44 & 41408\\
Level set prior & 137.93 & 124.42
\end{tabular}
\caption{The values of $\Phi$ at $\mathbb{E}(F_i(u)))$.}
\end{subfigure}
\caption{The values of the misfit $\Phi$ at the different mean conductivities, for the different conductivities and prior distributions.}
\label{fig:phivalues}
\end{figure}

\begin{figure}
\begin{center}
\begin{subfigure}{.45\textwidth}
\begin{center}
\def\arraystretch{1.2}
\begin{tabular}{c|c}
Quantity & Estimated ESS\\
\hline
$\hat{u}(0,1)$ & 40.0\\
$\hat{u}(0,2)$ & 90.7\\
$\hat{u}(1,1)$ & 35.4\\
$\hat{u}(1,2)$ & 44.5\\
$\hat{u}(1,3)$ & 36.0\\
$\hat{u}(2,1)$ & 101.9\\
$\hat{u}(2,2)$ & 37.9\\
$\hat{u}(2,3)$ & 89.7
\end{tabular}
\caption{Log-Gaussian prior}
\end{center}
\end{subfigure}
\begin{subfigure}{.45\textwidth}
\begin{center}
\def\arraystretch{1.2}
\begin{tabular}{c|c}
Quantity & Estimated ESS\\
\hline
$x_0^{(1)}$ & 241.7\\
$x_0^{(2)}$ & 89.6\\
$\hat{r}(1)$ & 101.1\\
$\hat{r}(2)$ & 179.4\\
$\hat{r}(3)$ & 277.8\\
$\hat{r}(4)$ & 214.8\\
$\hat{r}(5)$ & 146.7\\
$\hat{r}(6)$ & 146.7
\end{tabular}
\caption{Star-shaped prior}
\end{center}
\end{subfigure}\vspace{0.5cm}

\begin{subfigure}{.45\textwidth}
\begin{center}
\def\arraystretch{1.2}
\begin{tabular}{c|c}
Quantity & Estimated ESS\\
\hline
$\hat{u}(0,1)$ & 26.4\\
$\hat{u}(0,2)$ & 28.9\\
$\hat{u}(1,1)$ & 27.2\\
$\hat{u}(1,2)$ & 23.5\\
$\hat{u}(1,3)$ & 23.5\\
$\hat{u}(2,1)$ & 26.0\\
$\hat{u}(2,2)$ & 26.6\\
$\hat{u}(2,3)$ & 24.1
\end{tabular}
\caption{Level set prior}
\end{center}
\end{subfigure}
\end{center}
\caption{(Conductivity B) The estimated effective sample size for each chain, approximated using a variety of quantities, for the different choices of prior. In all cases $2.5\times 10^6$ total MCMC samples are produced, with the initial $5\times 10^5$ discarded as burn-in.}
\label{fig:ess}
\end{figure}

\begin{figure}
\begin{center}
\includegraphics[trim = 2cm 1cm 2cm 0cm,width=\textwidth]{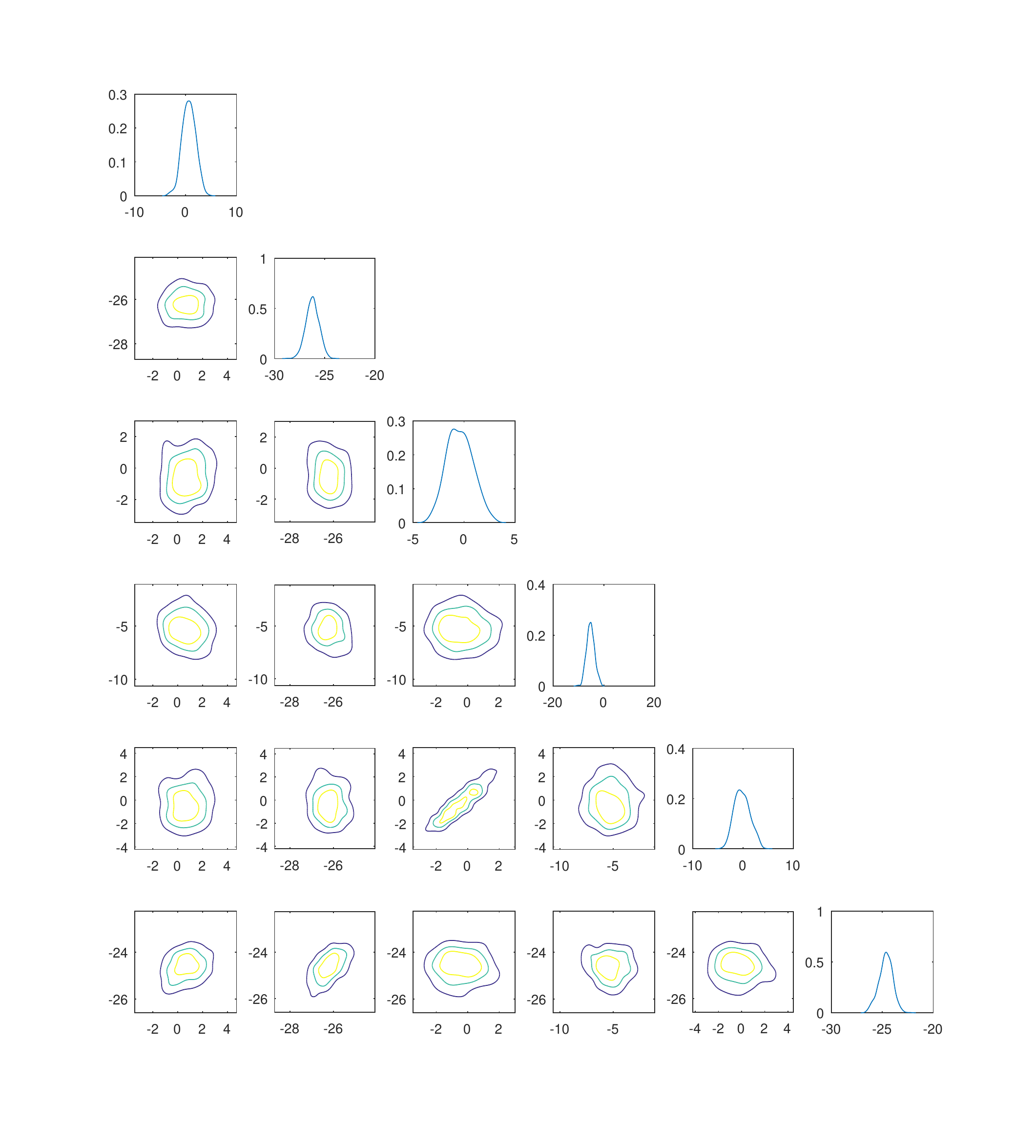}
\caption{(Conductivity B, log-Gaussian prior) Kernel density estimates associated with six Fourier coefficients of $u$. The diagonal displays the marginal densities of each coefficient, and the off-diagonals the marginal densities of corresponding pairs of coefficients.}
\label{fig:d_gauss}
\end{center}
\end{figure}

\begin{figure}
\begin{center}
\includegraphics[trim = 2cm 1cm 2cm 0cm,width=\textwidth]{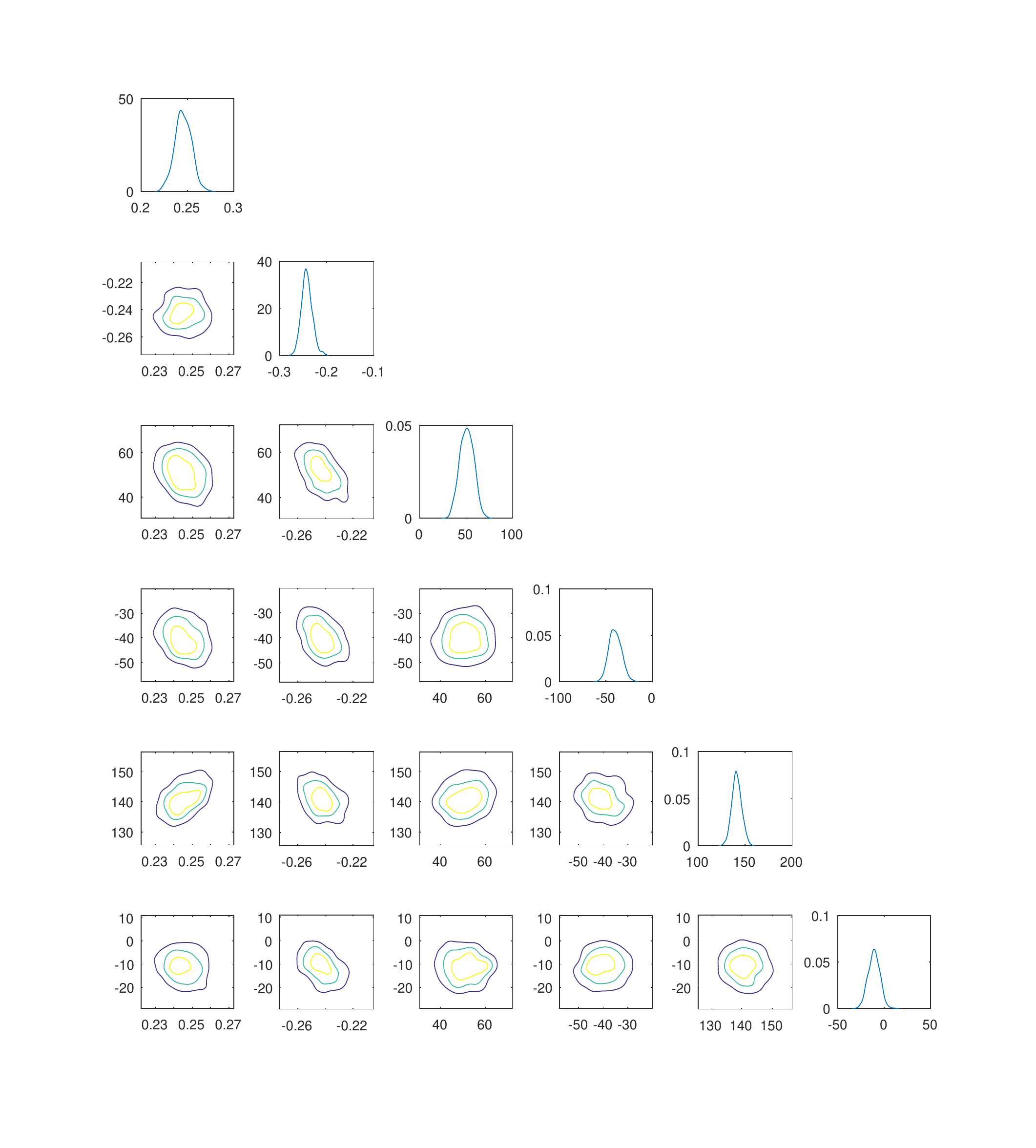}
\caption{(Conductivity B, star-shaped prior) Kernel density estimates associated with the centre $(x_0^{(1)},x_0^{(2)})$ and four Fourier coefficients of $r$. The diagonal displays the marginal densities of each quantity, and the off-diagonals the marginal densities of corresponding pairs of quantities.}
\label{fig:d_star}
\end{center}
\end{figure}

\begin{figure}
\begin{center}
\includegraphics[trim = 2cm 1cm 2cm 0cm,width=\textwidth]{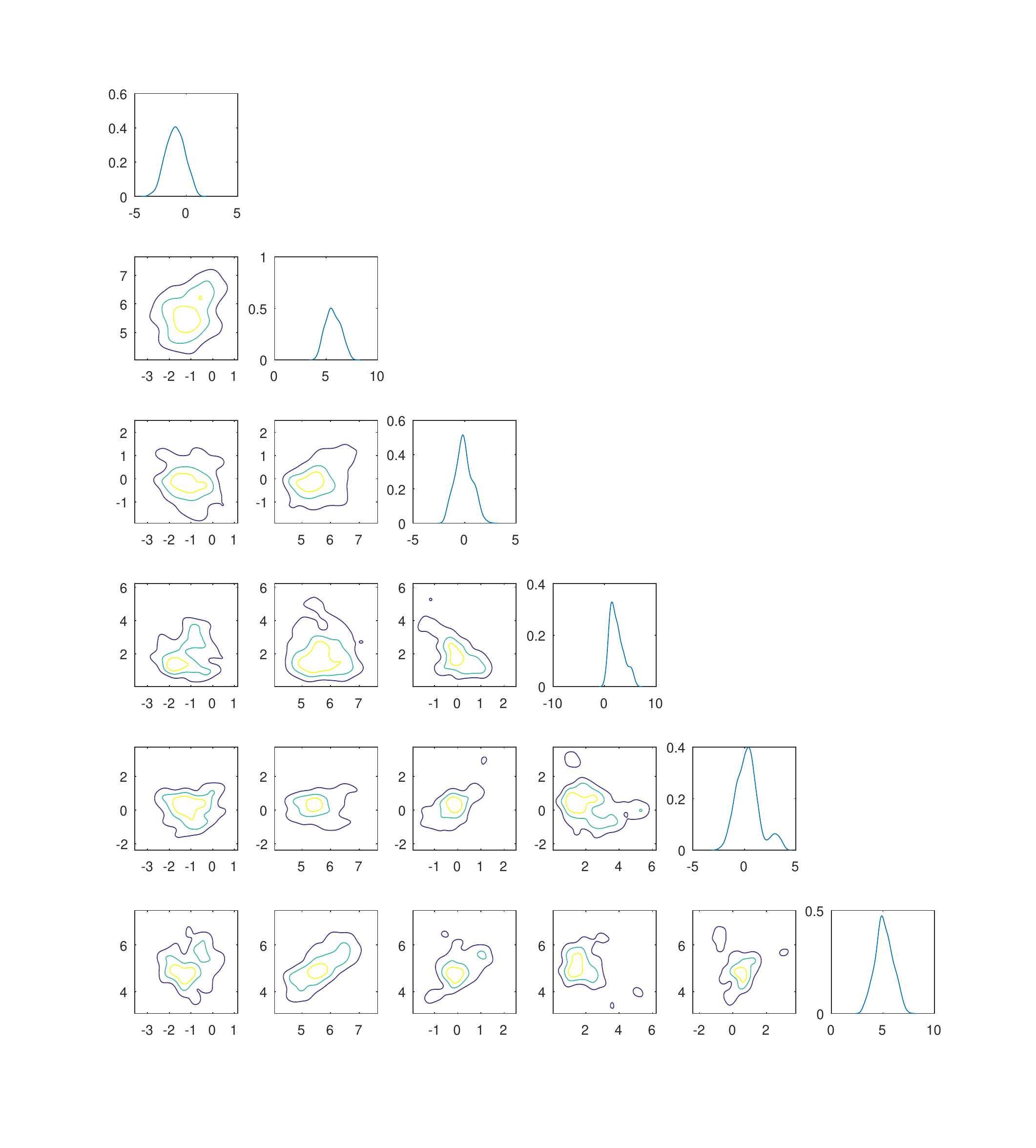}
\caption{(Conductivity B, level set prior) Kernel density estimates associated with six Fourier coefficients of $u$. The diagonal displays the marginal density of each coefficient, and the off-diagonals the marginal densities of corresponding pairs of coefficients. Axes are rescaled by $10^6$ for clarity.}
\label{fig:d_lvl}
\end{center}
\end{figure}

\section{Conclusions}
\label{sec:conc}
The primary contributions of this paper are:
\begin{itemize}
\item We have formulated the EIT problem rigorously in the infinite dimensional Bayesian framework.
\item We have studied three different prior models, each with their own advantages and disadvantages based on prior knowledge and the nature of the field we are trying to recover.
\item With each of these choices of prior we obtain well-posedness of the problem. We can obtain well-posedness using additional prior models, as long as the mapping from the state space to the conductivity space has appropriate regularity.
\item The infinite dimensional formulation of the problem leads to the use of state of the art function space MCMC methods for sampling the posterior distribution.
\item Simulations performed using these methods illustrate that the conditional mean provides a reasonable reconstruction of the conductivity, even with fairly significant noise on the measurements. They also illustrate the fact that the choice of prior has a significant impact on reconstruction and, in particular, that the geometric priors (star-shaped and level set)
can be particularly effective for the (approximately) piecewise constant fields that arise in many applications.
\end{itemize}
Future research directions could include the following:
\begin{itemize} 
\item Sampling the exact posterior distribution using MCMC can be computationally expensive. Methods that approximate the posterior may be as effective for calculating quantities such as the conditional mean, with much lower computational load. The relative effectiveness versus cost of different methods could be studied. This could be especially important for simulations in three dimensions, where forward model evaluations are even more expensive.
\item When using the level set prior, the length scale of samples could be treated hierarchically as an additional unknown in the problem.
\item The star-shaped prior could be extended to describe multiple inclusions, either with the number of inclusions fixed or as an additional unknown.
\mmd{\item The Calder\'on problem could be considered in place of the CEM. The data could then be either the full Neumann-to-Dirichlet (or Dirichlet-to-Neumann) map, or a finite number of pairs of Dirichlet and Neumann boundary values. In order to perturb the measurements with noise, the former case would require the definition of Gaussian distributions on spaces of operators, and the latter Gaussian distributions on manifolds.}
\end{itemize}

\section*{Acknowledgments} 
MMD is supported by EPSRC grant EP/HO23364/1 as part of the MASDOC DTC at the University of Warwick. AMS is supported by EPSRC and ONR. This research utilised Queen Mary's MidPlus computational facilities, supported by QMUL Research-IT and funded by EPSRC grant EP/K000128/1.

\providecommand{\href}[2]{#2}
\providecommand{\arxiv}[1]{\href{http://arxiv.org/abs/#1}{arXiv:#1}}
\providecommand{\url}[1]{\texttt{#1}}
\providecommand{\urlprefix}{URL }

\medskip


\begin{thebibliography}{10}

\bibitem{eidors}
\newblock A.~Adler and W.~R.~B. Lionheart,
\newblock {Uses and abuses of EIDORS: an extensible software base for EIT.},
\newblock \emph{Physiological measurement}, \textbf{27} (2006), S25--S42.

\bibitem{stability}
\newblock G.~Alessandrini,
\newblock {Stable determination of an inclusion by boundary measurements},
\newblock \emph{Applicable Analysis: An International Journal}, \textbf{27}
  (1988), 153--172,
\newblock \urlprefix\url{http://arxiv.org/abs/math/0403175}.

\bibitem{acceptance}
\newblock M.~B\'{e}dard,
\newblock {Optimal acceptance rates for Metropolis algorithms: Moving beyond
  0.234},
\newblock \emph{Stochastic Processes and their Applications}, \textbf{118}
  (2008), 2198--2222.

\bibitem{smc}
\newblock A.~Beskos, A.~Jasra, E.~A. Muzaffer and A.~M. Stuart,
\newblock {Sequential Monte Carlo methods for Bayesian elliptic inverse
  problems},
\newblock \emph{Statistics and Computing}, \textbf{25} (2015), 727--737, \urlprefix\url{http://link.springer.com/10.1007/s11222-015-9556-7}.

\bibitem{diff_bridges}
\newblock A.~Beskos, G.~O. Roberts, A.~M. Stuart and J.~Voss,
\newblock {MCMC methods for diffusion bridges},
\newblock \emph{Stochastics and Dynamics}, \textbf{8} (2008), 319--350, \urlprefix\url{http://dx.doi.org/10.1142/S0219493708002378}.

\bibitem{bogachev1}
\newblock V.~I. Bogachev,
\newblock \emph{{Measure Theory Volume I}},
\newblock Springer, 2007.

\bibitem{borcea_review}
\newblock L.~Borcea,
\newblock Electrical impedance tomography,
\newblock \emph{Inverse problems}, \textbf{18} (2002), R99.

\bibitem{star}
\newblock T.~Bui-Thanh and O.~Ghattas,
\newblock {An Analysis of Infinite Dimensional Bayesian Inverse Shape Acoustic
  Scattering and Its Numerical Approximation},
\newblock \emph{SIAM/ASA Journal on Uncertainty Quantification}, \textbf{2}
  (2014), 203--222, \urlprefix\url{http://epubs.siam.org/doi/abs/10.1137/120894877}.

\bibitem{Cotter2013}
\newblock S.~L. Cotter, G.~O. Roberts, A.~M. Stuart and D.~White,
\newblock {MCMC methods for functions modifying old algorithms to make them
  faster},
\newblock \emph{Statistical Science}, \textbf{28} (2013), 424--446, \urlprefix\url{http://eprints.maths.ox.ac.uk/1492/}.

\bibitem{lecturenotes}
\newblock M.~Dashti and A.~M. Stuart,
\newblock {The Bayesian Approach to Inverse Problems},
\newblock In \emph{Handbook of Uncertainty Quantification}, Springer, 2016.

\bibitem{hmc}
\newblock S.~Duane, A.~D. Kennedy, B.~J. Pendleton and D.~Roweth,
\newblock {Hybrid Monte Carlo},
\newblock \emph{Physics Letters B}, \textbf{195} (1987), 216--222.

\bibitem{linear1}
\newblock J.~N. Franklin,
\newblock {Well posed stochastic extensions of ill posed linear problems},
\newblock \emph{Journal of Mathematical Analysis and Applications}, \textbf{31}
  (1970), 682--716, \urlprefix\url{http://adsabs.harvard.edu/abs/1970JIMIA..31..682F}.

\bibitem{eit_fem}
\newblock M.~Gehre, B.~Jin and X.~Lu,
\newblock {An analysis of finite element approximation in electrical impedance
  tomography},
\newblock \emph{Inverse Problems}, \textbf{30} (2014), 045013, \urlprefix\url{http://stacks.iop.org/0266-5611/30/i=4/a=045013?key=crossref.284cda5f9645598cb83fccc9d226940a}.

\bibitem{spectralgap}
\newblock M.~Hairer, A.~M. Stuart and S.~J. Vollmer,
\newblock {Spectral gaps for a Metropolis-Hastings algorithm in infinite
  dimensions},
\newblock \emph{The Annals of Applied Probability}, \textbf{24} (2014), 2455-2490.

\bibitem{eit_early}
\newblock R.~P. Henderson and J.~G. Webster,
\newblock {An impedance camera for spatially specific measurements of the
  thorax},
\newblock \emph{IEEE transactions on bio-medical engineering}, \textbf{25}
  (1978), 250--254.

\bibitem{enkf}
\newblock M.~A. Iglesias, K.~J.~H. Law and A.~M. Stuart,
\newblock {Ensemble Kalman methods for inverse problems},
\newblock \emph{Inverse Problems}, \textbf{29} (2013), 045001, \urlprefix\url{http://stacks.iop.org/0266-5611/29/i=4/a=045001?key=crossref.a70b58896eed5588086da7ad3d04954b}.

\bibitem{levelset}
\newblock M.~A. Iglesias, Y.~Lu and A.~M. Stuart,
\newblock {A Bayesian Level Set Method for Geometric Inverse Problems},
\newblock \urlprefix\url{http://arxiv.org/abs/1504.00313}.

\bibitem{frechet}
\newblock J.~P. Kaipio, V.~Kolehmainen, E.~Somersalo and M.~Vauhkonen,
\newblock {Statistical inversion and Monte Carlo sampling methods in electrical
  impedance tomography},
\newblock \emph{Inverse Problems}, \textbf{16} (2000), 1487--1522.

\bibitem{bayes1}
\newblock J.~P. Kaipio, V.~Kolehmainen, M.~Vauhkonen and E.~Somersalo,
\newblock {Inverse problems with structural prior information},
\newblock \emph{Inverse Problems}, \textbf{15} (1999), 713--729.

\bibitem{kaipio}
\newblock J.~P. Kaipio and E.~Somersalo,
\newblock \emph{{Statistical and Computational Inverse Problems}},
\newblock Springer, 2005.

\bibitem{mcmc_kass}
\newblock R.~E. Kass,
\newblock {Markov Chain Monte Carlo in practice: A roundtable discussion},
\newblock \emph{The American Statistician}, \textbf{52} (1998), 93--100.

\bibitem{partialdata}
\newblock C.~E. Kenig, J.~Sj\"{o}strand and G.~Uhlmann,
\newblock {The Calder\'{o}n problem with partial data on manifolds and
  applications},
\newblock \emph{Analysis and PDE}, \textbf{6} (2013), 2003--2048.

\bibitem{eit_disc2}
\newblock K.~Knudsen, M.~Lassas, J.~L. Mueller and S.~Siltanen,
\newblock {D-Bar Method for Electrical Impedance Tomography with Discontinuous
  Conductivities},
\newblock \emph{SIAM Journal on Applied Mathematics}, \textbf{67} (2007),
  893--913.

\bibitem{eit_disc1}
\newblock K.~Knudsen, M.~Lassas, J.~L. Mueller and S.~Siltanen,
\newblock {Reconstructions of piecewise constant conductivities by the D-bar
  method for electrical impedance tomography},
\newblock \emph{Journal of Physics: Conference Series}, \textbf{124} (2008),
  012029.

\bibitem{regdbar}
\newblock K.~Knudsen, M.~Lassas, J.~L. Mueller and S.~Siltanen,
\newblock {Regularized D-bar method for the inverse conductivity problem},
\newblock \emph{Inverse Problems and Imaging}, \textbf{3} (2009), 599--624.

\bibitem{eit_shape}
\newblock V.~Kolehmainen, M.~Lassas, P.~Ola and S.~Siltanen,
\newblock {Recovering boundary shape and conductivity in electrical impedance
  tomography},
\newblock \emph{Inverse Problems and Imaging}, \textbf{7} (2013), 217--242.

\bibitem{emulators}
\newblock S.~Lan, T.~Bui-Thanh, M.~Christie and M.~Girolami,
\newblock {Emulation of Higher-Order Tensors in Manifold Monte Carlo Methods
  for Bayesian Inverse Problems},
\newblock \emph{Journal of Computational Physics}, \textbf{308} (2016), 81-101.

\bibitem{ert}
\newblock R.~E. Langer,
\newblock {An inverse problem in differential equations},
\newblock \emph{Bulletin of the American Mathematical Society}, \textbf{39}
  (1933), 814--821.

\bibitem{lasanen1}
\newblock S.~Lasanen,
\newblock {Non-Gaussian Statistical Inverse Problems. Part I: Posterior
  Distributions},
\newblock \emph{Inverse Problems \& Imaging}, \textbf{6}.

\bibitem{lasanen2}
\newblock S.~Lasanen,
\newblock {Non-Gaussian Statistical Inverse Problems. Part II: Posterior
  Convergence for Approximated Unknowns},
\newblock \emph{Inverse Problems \& Imaging}, \textbf{6}.

\bibitem{whittlematern}
\newblock S.~Lasanen, J.~M.~J. Huttunen and L.~Roininen,
\newblock {Whittle-Mat\'{e}rn priors for Bayesian statistical inversion with
  applications in electrical impedance tomography},
\newblock \emph{Inverse Problems and Imaging}, \textbf{8} (2014), 561--586, \urlprefix\url{http://www.aimsciences.org/journals/displayArticlesnew.jsp?paperID=9912}.

\bibitem{lassas2009discretization}
\newblock M.~Lassas, E.~Saksman and S.~Siltanen,
\newblock Discretization-invariant {B}ayesian inversion and {B}esov space
  priors,
\newblock \emph{Inverse Problems and Imaging}, \textbf{3} (2009), 87--122.

\bibitem{linear3}
\newblock M.~S. Lehtinen, L.~Paivarinta and E.~Somersalo,
\newblock {Linear inverse problems for generalised random variables},
\newblock \emph{Inverse Problems}, \textbf{5} (1989), 599--612.

\bibitem{linear2}
\newblock A.~Mandelbaum,
\newblock {Linear estimators and measurable linear transformations on a Hilbert
  space},
\newblock \emph{Zeitschrift f\"{u}r Wahrscheinlichkeitstheorie und Verwandte
  Gebiete}, \textbf{65} (1984), 385--397.

\bibitem{reconstruction}
\newblock A.~I. Nachman,
\newblock {Reconstructions from boundary measurements},
\newblock \emph{Annals of Mathematics}, \textbf{128} (1988), 531--576, \urlprefix\url{http://cat.inist.fr/?aModele=afficheN\&cpsidt=7131706}.

\bibitem{nachman96}
\newblock A.~I. Nachman,
\newblock {Global uniqueness for a two-dimensional inverse boundary value
  problem},
\newblock \emph{Annals of Mathematics}, \textbf{143} (1996), 71--96.

\bibitem{network}
\newblock G.~C. Papanicolaou and L.~Borcea,
\newblock {Network Approximation for Transport Properties of High Contrast
  Materials},
\newblock \emph{SIAM Journal on Applied Mathematics}, \textbf{58} (1998),
  501--539.

\bibitem{calderon}
\newblock M.~Salo,
\newblock {Calder\'{o}n problem},
\newblock \emph{Lecture Notes}.

\bibitem{geometry}
\newblock D.~Schymura,
\newblock {An upper bound on the volume of the symmetric difference of a body
  and a congruent copy},
\newblock \emph{Advances in Geometry}, \textbf{4} (2014), 12.

\bibitem{cheney}
\newblock E.~Somersalo, M.~Cheney and D.~Isaacson,
\newblock {Existence and Uniqueness for Electrode Models for Electric Current
  Computed Tomography},
\newblock \emph{SIAM Journal on Applied Mathematics}, \textbf{52} (1992),
  1023--1040.

\bibitem{inverse}
\newblock A.~M. Stuart,
\newblock {Inverse problems : a Bayesian perspective},
\newblock \emph{Acta Numerica}, \textbf{19} (2010), 451--559, \urlprefix\url{http://webcat.warwick.ac.uk:80/record=b2341209}.

\bibitem{uniqueness}
\newblock J.~Sylvester and G.~Uhlmann,
\newblock {A Global Uniqueness Theorem for an Inverse Boundary Value},
\newblock \emph{Annals of Mathematics}, \textbf{125} (1987), 153--169.

\bibitem{mwg}
\newblock L.~Tierney,
\newblock {Markov Chains for Exploring Posterior Distributions},
\newblock \emph{Annals of Statistics}, \textbf{22} (1994), 1701--1728.

\end{thebibliography}
\end{document}